\newtheorem{thm}{Theorem}[section]
\newtheorem{lem}{Lemma}[section]
\newtheorem{prop}{Proposition}[section]
\newtheorem{definition}{Definition}[section]
\newtheorem{df}{Definition}[section]
\newtheorem{remark}{Remark}[section]
\newcommand{\nc}{\newcommand}
\nc{\RBA}{{\mathrm{RBA}_\lambda}}
\nc{\sta}{\star_{A}}
\nc{\stb}{\star_{B}}
\nc{\s}{\star}
\nc{\C}{\mathrm{C}}
\nc{\ca}{\C_{\mathrm{Alg}}}
\nc{\crb}{\C_{\mathrm{RBO}_\lambda}}
\nc{\cra}{\C_{\mathrm{RBA}_\lambda}}
\nc{\cm}{\C_{\mathrm{mor}_\lambda}}
\nc{\hm}{\mathrm{H}_{\mathrm{mor}_\lambda}}
\nc{\cdo}{\C_{\mathrm{DO}_\lambda}}
\nc{\cda}{\C_{\mathrm{DA}_\lambda}}
\newcommand{\field}{\mathbb{F}}
\newcommand{\Hom}{{\rm Hom}}
\newcommand{\End}{{\rm End}}
\newcommand{\Id}{\rm Id}
\newcommand{\id}{\rm Id}
\let \t=\otimes
\let \c=\circ
\begin{document}

\title[Cohomologies and deformations of differential algebra morphisms]{Cohomologies and deformations of differential algebra morphisms}



\author[Du]{Lei Du}
\address{School of Mathematical Sciences, Anhui University, Hefei, 230601, China}
\curraddr{}
\email{dulei@ahu.edu.cn}

\author[Bao]{Yanhong Bao}
\address{School of Mathematical Sciences, Anhui University, Hefei, 230601, China}
\curraddr{}
\email{baoyh@ahu.edu.cn}


\thanks{L. Du is the corresponding author. }

\subjclass[2010]{13D10, 16T15}

\keywords{differential algebras; morphisms; cohomologis;  deformations.}

\date{}

\dedicatory{}

\begin{abstract}
This paper studies the formal deformations of differential algebra morphisms. As a consequence, we develop a cohomology theory of differential algebra morphisms to interpret the lower degree cohomology groups as  formal deformations. Then, we prove the Cohomology Comparison Theorem of differential algebra morphisms, i.e., the cohomology of a morphism of differential algebras is isomorphic to the cohomology of an
auxiliary differential algebra. Finally,  we can give a minimal model for morphism of differential algebras with weight=0.
\end{abstract}

\maketitle

\section{Introduction}

Differential algebras originate from the work of Ritt\cite{Ritt} on differential equations, which are developed by Kaplansky\cite{KA}, Kolchin\cite{KO}, Magid\cite{MA}.A commutative differential algebra naturally gives rise to a Novikov algebra by the well-known
work of Gelfand and Dorfman \cite{GE}.Derivations on path algebras and universal enveloping algebras of diﬀerential Lie algebras were studied in \cite{GF,Po}. Recently, deformations of differential algebras are studied in \cite{CGWZ}.

In 1964, Gerstenhaber\cite{GS1} has described  algebraic deformation theory. After that, the cohomology and obstruction theory of various kinds of algebras has been studied widely \cite{BA,lie}. Gerstenhaber and Schack \cite{ms1,ms2} develop a powerful result called the Cohomology Comparison Theorem (CCT) to study a deformation theory of algebra  morphisms.

In this paper, our main objects are differential algebra morphisms.  Precisely, the notion of bimodules  over differential algebra morphisms is defined and cohomologies and deformations of differential algebra morphisms may be established.  In development, the Cohomology Comparison Theorem of differential versions is considered, which says that  the cohomology of a differential algebra morphism is isomorphic to the cohomology of an auxiliary differential algebra morphism.

The paper is organized as follows.  Section 2 reviews some concepts on differential algebras and differential bimodules and provides a cohomology theory of differential algebra morphisms (See Definition \ref{df: cohomology of RB morphism}). Section 3  sets up deformations of differential algebra morphisms, where it is shown that a differential algebra moprhism is rigid if the $2$th-cohomology group is zero (See Theorem \ref{thm: rigid}). Section 4 gives the Cohomology Comparison Theorem (CCT) of  differential algebra morphisms, which says a cohomology of a differential algebra morphism is isomorphic to the cohomology of an auxiliary differential algebra morphism (See Theorem \ref{thm: CCT of RB}). In Section 5, we consider the case of differential algebras with weight=0. Since  differential algebras with weight=0 are Koszul, we can give a minimal model for morphism of differential algebras with weight=0  via the method of Dotsenkol-Poncin\cite{Do}.

In this paper, we work over a field $\field$ of characteristic 0 and unless otherwise specified, linear spaces, linear maps, $\otimes$, $\Hom$, $\End$ are defined over $\field$.

\section{Cohomology of differential algebra morphisms}
In this section, we give the notion of  bimodules over differential algebra morphisms and  establish  cohomologies of differential algebra morphisms with coefficients in their modules. First we recall some concepts and results on differential algebras and differential bimodules.

\begin{df}\label{Def:differential algebra} {\rm (\cite{LO})}
Let $A$ be an associative algebra over field $\field$ and
$\lambda\in \field$. A linear
operator $d: A\rightarrow A$ is said to be a differential operator of
weight $\lambda$ if it satisfies
\begin{eqnarray}\label{Eq: Rota-Baxter relation}
d(u\cdot v)=d(u)v+ud(v)+\lambda d(u)d(v)\end{eqnarray}	
for any $u,v \in A$. In this case,  $(A, d)$ is called a differential
algebra of weight $\lambda$.
\end{df}

\begin{df}
Let $(A, d_A)$ and $(B,d_B)$ be differential algebras. A linear map $\phi:A\rightarrow B$ is called a morphism of differential algebras from $(A, d_A)$ to $(B,d_B)$
if $\phi$ is an algebra morphism and satisfies $\phi\circ d_A= d_B \circ \phi$.
\end{df}
Denote by $\mathbf{Diff}$ the category of differential algebras of weight $\lambda$ with above morphisms.
\begin{df}\label{Def: Rota-Baxter bimodules}
Let $(A, d_A)$ be a differential algebra of weight $\lambda$.  A pair $(M, d_M)$ is called a bimodule
over differential algebra $(A, d_A)$ if $M$ is a bimodule
over associative algebra $A$ and $d_M: M\rightarrow M$ satisfies
\begin{align}
d_M(ax)&=d_A(a)x+ad_M(x)+\lambda d_A(a)d_M(x),\\
d_M(xa)&=xd_A(a)+d_M(x)a+\lambda d_M(x)d_A(a),
\end{align}
 for any $a\in A$ and $x\in M$.
\end{df}
Of course, $(A, d_A)$ is  a differential bimodule over itself.

\begin{df}
Let $(M, d_M)$ and $(N, d_N)$ be bimodules of differential algebra $(A,d_A)$. A linear map  $f : M\rightarrow N$ is called a morphism of $(A,d_A)$-bimodule if $f$ is a $A$-bimodule morphism and such that $d_N\circ f=f\circ d_M$.
\end{df}

Given a differential bimodule $M = (M, d_M)$ over the differential algebra $A = (A, d_A)$, a new
differential bimodule structure on $M$, with the same derivation $d_M$, is given by
\begin{align}
a\triangleright m=(a+\lambda d_A(a))m, m\triangleleft a=m(a+\lambda d_A(a)),m\in M, a\in A.
\end{align}
For distinction, we let $_\triangleright M_\triangleleft$ denote this new differential bimodule structure over $(A, d_A)$;
for details, see \cite{GY}.

In \cite{ms1}, Gerstenhaber and Schack define the bimodule of an associative algebra morphism, that is, let $\phi:A\rightarrow B$ be a morphism of associative algebras , then a $\phi$-bimodule is a triple $\langle M,N ,\psi\rangle$ such
that $M$ is a  bimodule over $A$, $N$ is a  bimodule over $B$, and $\psi: M,\rightarrow N$
is an $A$-bimodule morphism, where $N$ is considered as a bimodule over associative algebra $A$ in a natural way.\\
\indent Similarly, let $\phi:(A, d_{A})\rightarrow (B, d_{B})$ be a morphism of differential algebras of weight $\lambda$, then a differential $\phi$-bimodule is a triple $\langle M,N ,\psi\rangle$ such
that $(M,d_M)$ is a differential bimodule over $(A, d_{A})$, $(N,d_N)$ is a differential bimodule over $(B, d_{B})$, and $\psi: (M,d_M)\rightarrow (N,d_N)$
is a $(A, d_{A})$-bimodule morphism, where $(N,d_N)$ is regarded as a bimodule over differential algebra $(A, d_{A})$ in a natural way.

\begin{thm}\label{thm:triang action Module-morphism}
Let $\psi:(M,d_M)\rightarrow(N,d_N)$ be a morphism of $(A, d_{A})$-bimodules, then $\psi:({}_{\rhd}M_\lhd,d_M) \rightarrow({}_\rhd N_\lhd,d_N) $ is a morphism of $(A, d_{A})$-bimodules.
\end{thm}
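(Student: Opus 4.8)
The plan is to verify directly that $\psi$, now regarded as a map $({}_\rhd M_\lhd, d_M)\to ({}_\rhd N_\lhd, d_N)$, satisfies the three conditions defining a morphism of $(A,d_A)$-bimodules: left $A$-linearity for the action $\rhd$, right $A$-linearity for the action $\lhd$, and commutation with the derivations. That ${}_\rhd M_\lhd$ and ${}_\rhd N_\lhd$ are themselves differential bimodules over $(A,d_A)$ is already recorded in the construction recalled above (cf.\ \cite{GY}), so nothing about the source and target objects needs to be re-established, and only these three compatibility conditions remain to be checked.

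First, the derivation condition comes for free: the underlying linear map $\psi$ and the operators $d_M$, $d_N$ are literally unchanged by the twist, so the hypothesis $d_N\circ\psi=\psi\circ d_M$ persists verbatim. Next, for the action conditions, fix $a\in A$ and $m\in M$; using that $\psi$ is $\field$-linear and an $A$-bimodule morphism for the original actions, one computes
\[
\psi(a\rhd m)=\psi\bigl((a+\lambda d_A(a))\,m\bigr)=(a+\lambda d_A(a))\,\psi(m)=a\rhd\psi(m),
\]
and symmetrically $\psi(m\lhd a)=\psi\bigl(m\,(a+\lambda d_A(a))\bigr)=\psi(m)\,(a+\lambda d_A(a))=\psi(m)\lhd a$. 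This is the whole of the argument.

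I expect no genuine obstacle here: the statement is a formal consequence of the definition of the twisted bimodule structure together with $\field$-linearity of $\psi$. The only point warranting a moment's care arises when this theorem is applied in the setting of differential $\phi$-bimodules, where $N$ carries its $(A,d_A)$-bimodule structure by restriction along a morphism $\phi:(A,d_A)\to(B,d_B)$: there the twisted left action reads $a\rhd n=\phi(a+\lambda d_A(a))\,n=(\phi(a)+\lambda d_B(\phi(a)))\,n$, since $\phi\circ d_A=d_B\circ\phi$, so the $A$-twisted structure on $N$ is exactly the restriction of the $B$-twisted structure; once this identification is noted, the computation above applies unchanged and the proof is complete.
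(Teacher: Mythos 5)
Your proof is correct and follows essentially the same route as the paper's: both verify directly that $\psi(a\rhd m)=a\rhd\psi(m)$ and $\psi(m\lhd a)=\psi(m)\lhd a$ using $A$-bilinearity of $\psi$, with the derivation condition carrying over unchanged. Your closing remark on the restriction of the twisted $B$-structure along $\phi$ is a useful clarification the paper leaves implicit, but it does not change the argument.
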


\begin{proof}
Let $_{\rhd}M_\lhd$ and $_\rhd N_\lhd$ be $(A, d_{A})$-bimodules. It suffice to show that $\psi:({}_{\rhd}M_\lhd,d_M) \rightarrow({}_\rhd N_\lhd,d_N) $ is a morphism of $(A_{\s}, d_{A})$-bimodules.

Since we have $d_{N} \circ \psi= \psi \circ d_M$, it follows that

\begin{align*}
\psi(a\rhd m)&=\psi((a+\lambda d_A(a))x)\\
          &=\psi(ax)+\lambda \psi(d_A(a)x)\\
          &=a\psi(x)+\lambda d_A(a)\psi(x)\\
          &=(a+\lambda d_A(a))\psi (x)\\
          &=a\rhd \psi(x).
\end{align*}
Similarly, $\psi(x\lhd a)=\psi(x)\lhd a$. Therefore, $\psi$ is a morphism of $(A, d_{A})$-bimodules.
\end{proof}

\begin{remark}\label{remark morphism module}
Let $\phi:(A, d_{A})\rightarrow (B, d_{B})$ be a morphism of differential algebras and $\langle M,N ,\psi\rangle$ be a differential $\phi$-bimodule. By Theorem \ref{thm:triang action Module-morphism},
$\psi:({}_{\rhd}M_\lhd,d_M)\rightarrow({}_{\rhd}N_\lhd,d_N)$ is a morphism
 of $(A, d_{A})$-bimodules, that is,
$\langle{}_{\rhd}M_\lhd,{}_\rhd N_\lhd ,\psi\rangle$ is still a differential $\phi$-bimodule.
To avoid confusion in the following context, we denote it by
$\langle{}_{\rhd}M_\lhd,{}_\rhd N_\lhd,_\triangleright\psi_\triangleleft\rangle$.
\end{remark}

Let $A$ be an associative algebra, $M$ be a bimodule over $A$. The Hochschild cochain complex of $A$ with coefficients in $M$ is $$\C^\bullet_{\mathrm{Alg}}(A,M):=\bigoplus\limits_{n=0}^\infty \C^n_{\mathrm{Alg}}(A,M),$$
 where $\C^n_{\mathrm{Alg}}(A,M)=\Hom(A^{\t n},M)$ and the differential $\partial_{\rm Alg}^n: \C^n_{\mathrm{Alg}}(A,M)\rightarrow \C^{n+1}_{\mathrm{Alg}}(A,M)$ is defined as:
 \begin{align*}
\partial_{\rm Alg}^n(f)(a_{1, n+1} )= &  a_1f(a_{2, n+1})+\sum\limits_{i=1}^n(-1)^{i}f(a_{1, i-1}\t a_i a_{i+1}\t  a_{i+2, n+1})\\
	 &+ (-1)^{n+1}f(a_{1, n})a_{n+1}
\end{align*}
for all $f\in \C^n_{\mathrm{Alg}}(A,M), a_{1,n}=a_1,\dots,a_{n+1}\in A$.

 The cohomology of the Hochschild cochain complex $\C^\bullet_{\mathrm{Alg}}(A,M)$ is called the Hochschild cohomology of $A$ with coefficients in $M$,  denoted by $\mathrm{H}^\bullet(A,M)$.

\begin{df}{\rm (\cite{CGWZ})}
 	Let $A=(A,d_A)$ be a differential algebra of weight $\lambda$ and $(M,d_M)$ be a differential bimodule over it. Then the cochain complex $(\ca^\bullet(A, {_\rhd M_\lhd}),\partial_{\rm Alg}^\bullet)$ is called the cochain complex of differential operator $d_A$ with coefficients in $(M, d_M)$,  denoted it by $(\cdo^\bullet(A, M),\partial^\bullet_{{\rm DO}_\lambda})$.
 \end{df}

\begin{df}\label{def:map_cone_da_to_do}{\rm (\cite{CGWZ})}
Let $(A,d_A)$ be a differential algebra of weight $\lambda$, $(M,d_M)$ be a  differential bimodule over it. Define a chain map   $\Phi^\bullet:\ca^\bullet(A,M) \rightarrow \cdo^\bullet(A,M)$ as following
\begin{enumerate}
\item[{\rm (1)}]$\Phi^0(x)=-d_M(x), x\in C^0_{Alg}(A,M)=M$,

\item[{\rm (2)}]When  $n\geqslant 1$ and $ f\in \ca^n(A,M)$,  define $\Phi^n(f)\in \cdo^n(A,M)$ by:
\begin{align*}
 &\Phi^n(f)(a_{1,n}) \\
=&\sum_{k=1}^n\lambda^{k-1}\sum_{1\leqslant i_1<\cdot\cdot\cdot<i_k\leqslant n}f(a_{1,i_1-1}\otimes d_A(a_{i_1})\otimes a_{i_1+1,i_2-1}\otimes d_A(a_{i_2})\otimes\cdot\cdot\cdot d_A(a_{i_k})\otimes a_{i_{k+1},n})\\
&-d_M(f(a_{1,n})).
\end{align*}
\end{enumerate}

Then  the  cochain complex $(\cda^\bullet(A,M), \partial_{{\rm DA}_\lambda}^\bullet)$  of differential algebra $(A,d_A)$ with coefficients in $(M,d_M)$ is defined by the negative shift of the mapping cone of $\Phi^\bullet$. Exactly,
\[\cda^0(A,M)=\ca^0(A,M),   \cda^n(A,M)=\ca^n(A,M)\oplus \cdo^{n-1}(A,M), \forall n\geqslant 1,\]
 and the differential $\partial_{{\rm DA}_\lambda}^n: \cda^n(A,M)\rightarrow \cda^{n+1}(A,M)$ is given by \[\partial_{{\rm DA}_\lambda}^n(f,g)= (\partial_{\rm Alg}^n(f), -\partial_{{\rm DO}_\lambda}^{n-1}(g)  -\Phi^n(f))\]
 for any $f\in \ca^n(A,M)$ and $g\in \cdo^{n-1}(A,M)$.

Futher, the  cohomology of $(\cda^\bullet(A,M), \partial_{{\rm DA}_\lambda}^\bullet)$ is denoted by $\mathrm{H}_{\mathrm{DA}_\lambda}^\bullet(A,M)$.
\end{df}

In \cite{ms1}, Gerstenhaber and Schack have studied a cohomology thoery of associative algebra morphisms. Let $\phi: A\rightarrow B$ be a morphism of associative algebras and $\langle M,N,\psi\rangle$ be a $\phi$-bimodule. The cochain complex of $\phi$ with coefficients in $\langle M,N,\psi\rangle$ is $\C^\bullet (\phi,\psi)$, where $\C^n (\phi,\psi)=0$, for $n< 0$; $\C^0 (\phi,\psi)=\C^0_{\mathrm{Alg}} (A,M)\oplus\C^0_{\mathrm{Alg}} (B,N)$ and
for $n\geq1, \C^n (\phi,\psi)=\C^n_{\mathrm{Alg}} (A,M)\oplus\C^{n}_{\mathrm{Alg}} (B,N)\oplus \C^{n-1}_{\mathrm{Alg}} (A,N)$.
The differential $\delta^n:\ca^n(\phi,\psi)\rightarrow \ca^{n+1}(\phi,\psi)$ is defined by
$$\delta^n(f,g,h)=(\partial_{\rm Alg}^n f,\partial_{\rm Alg}^n g,\psi \circ f-g\circ \phi^{\t n}-\partial_{\rm Alg}^{n-1}h).$$

Let $\phi:(A, d_{A})\rightarrow (B, d_{B})$ be a morphism of differential algebras of weight $\lambda$ and $\langle M,N ,\psi\rangle$ be a differential $\phi$-bimodule. Then, $\langle{}_{\rhd}M_\lhd,{}_\rhd N_\lhd ,_\triangleright\psi_\triangleleft\rangle$ is still a differential $\phi$-bimodule.  Hence, we may construct a cochain complex that controls  deformations of  differential algebra morphisms as follows.

\begin{prop}\label{prop:chain map}
Suppose that  $\C^\bullet(\phi,\psi)$ is a Hochschild cochain complex of $\phi$ with coefficients in $\psi$ and $\C^\bullet(\phi,_\triangleright\psi_\triangleleft)$ is a Hochschild cochain complex of $\phi$ with coefficients in $_\triangleright\psi_\triangleleft$. $\pi^\bullet: \C^\bullet(\phi,\psi)\rightarrow \C^\bullet(\phi,_\triangleright\psi_\triangleleft)$ is defined as
\begin{enumerate}
\item[{\rm (1)}] $\pi^0: \C^0(\phi,\psi)\rightarrow \C^0(\phi,_\triangleright\psi_\triangleleft)$ to
 be identity map,
\item[{\rm (2)}] when $n\geqslant 1$, $\pi^n: \C^n(\phi,\psi)\rightarrow \C^n(\phi,_\triangleright\psi_\triangleleft)$ is defined as
 $$\pi^n(f,g,h)=(\Phi_{A,M}^n(f),\Phi_{B,N}^n(g),\Phi_{A,N}^{n-1}(h))$$
 for $f\in\ca^n(A,M)$, $g\in\ca^n(B,N)$ and $h\in\ca^{n-1}(A,N)$.
 \end{enumerate}
 Then $\pi^\bullet: \C^\bullet(\phi,\psi)\rightarrow \C^\bullet(\phi,_\triangleright\psi_\triangleleft)$ is a chain map.
\end{prop}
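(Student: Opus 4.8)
The plan is to verify the chain-map identity $\pi^{n+1}\circ\delta^n=\bar\delta^{n}\circ\pi^n$ one coordinate at a time, for every $n\geqslant 0$; here $\bar\delta^{\bullet}$ denotes the differential of $\C^\bullet(\phi,{}_\triangleright\psi_\triangleleft)$, which is a legitimate cochain complex of the stated form because $\langle{}_\triangleright M_\triangleleft,{}_\triangleright N_\triangleleft,{}_\triangleright\psi_\triangleleft\rangle$ is a differential $\phi$-bimodule by Theorem~\ref{thm:triang action Module-morphism} and Remark~\ref{remark morphism module}. Since $\delta^n(f,g,h)=\bigl(\partial_{\rm Alg}^nf,\ \partial_{\rm Alg}^ng,\ \psi\circ f-g\circ\phi^{\t n}-\partial_{\rm Alg}^{n-1}h\bigr)$, the first two coordinates of $\pi^{n+1}\delta^n(f,g,h)$ are $\Phi^{n+1}_{A,M}(\partial_{\rm Alg}^nf)$ and $\Phi^{n+1}_{B,N}(\partial_{\rm Alg}^ng)$, while those of $\bar\delta^{n}\pi^n(f,g,h)$ are $\partial_{{\rm DO}_\lambda}^n\Phi^n_{A,M}(f)$ and $\partial_{{\rm DO}_\lambda}^n\Phi^n_{B,N}(g)$; they coincide because $\Phi^\bullet$ is a chain map, i.e. $\partial_{{\rm DO}_\lambda}^n\circ\Phi^n=\Phi^{n+1}\circ\partial_{\rm Alg}^n$ for the pairs $(A,M)$ and $(B,N)$, which is part of Definition~\ref{def:map_cone_da_to_do}. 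So the first two coordinates are free.

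Everything of substance is in the third coordinate, where the identity to prove is
\[
\Phi^n_{A,N}\bigl(\psi\circ f-g\circ\phi^{\t n}-\partial^{n-1}_{\rm Alg}h\bigr)={}_\triangleright\psi_\triangleleft\circ\Phi^n_{A,M}(f)-\Phi^n_{B,N}(g)\circ\phi^{\t n}-\partial^{n-1}_{{\rm DO}_\lambda}\Phi^{n-1}_{A,N}(h).
\]
Using $\field$-linearity of $\Phi^n_{A,N}$, I would split this into three ``the structure map slides through $\Phi$'' statements. (1) $\Phi^n_{A,N}(\psi\circ f)={}_\triangleright\psi_\triangleleft\circ\Phi^n_{A,M}(f)$: in the defining formula for $\Phi^n$, pull $\psi$ out of each summand of $\sum_k\lambda^{k-1}\sum_{i_1<\dots<i_k}$, move it past the terminal term $-d_M(f(a_{1,n}))$ via $\psi\circ d_M=d_N\circ\psi$, and note that ${}_\triangleright\psi_\triangleleft$ and $\psi$ are the same underlying linear map (only the bimodule structures on source and target differ). (2) $\Phi^n_{A,N}(g\circ\phi^{\t n})=\Phi^n_{B,N}(g)\circ\phi^{\t n}$: applying $\phi$ to all tensor slots, $\phi\circ d_A=d_B\circ\phi$ turns each inserted $d_A(a_{i_j})$ into $d_B(\phi(a_{i_j}))$, and the terminal terms agree since $d_N(g(\phi(a_1),\dots,\phi(a_n)))=d_N\bigl((g\circ\phi^{\t n})(a_{1,n})\bigr)$. (3) $\Phi^n_{A,N}(\partial^{n-1}_{\rm Alg}h)=\partial^{n-1}_{{\rm DO}_\lambda}\Phi^{n-1}_{A,N}(h)$, once again the chain-map property of $\Phi^\bullet$, now for $(A,N)$. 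Subtracting (2) and (3) from (1) gives the displayed equality, and hence the chain-map equation for all $n\geqslant 1$; the remaining case $n=0$ (no $h$-slot) is a direct substitution using that $\Phi^0$ is minus the relevant derivation together with $\psi\circ d_M=d_N\circ\psi$.

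Conceptually nothing here is deep: each equality is forced by $\psi\circ d_M=d_N\circ\psi$, $\phi\circ d_A=d_B\circ\phi$, the $A$-bimodule morphism property of $\psi$ (which makes the target complex well defined), and the chain-map property of $\Phi^\bullet$. The one real obstacle is bookkeeping — expanding the nested sums $\sum_k\lambda^{k-1}\sum_{i_1<\dots<i_k}$ in $\Phi^n$ and tracking the reindexing when $d_M$, $d_N$, $\phi$, or the Hochschild differential is commuted through them, so that no summand is lost, duplicated, or mis-signed. In particular, re-deriving the chain-map property of $\Phi^\bullet$ from scratch would be the most laborious sub-computation, but since it is already recorded in Definition~\ref{def:map_cone_da_to_do} it can simply be cited.
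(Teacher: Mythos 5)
Your proposal is correct and follows essentially the same route as the paper: reduce to the third coordinate via the chain-map property of $\Phi^\bullet$ (which also absorbs the $\partial_{\rm Alg}^{n-1}h$ term), and then verify $\Phi^n_{A,N}(\psi\circ f)=\psi\circ\Phi^n_{A,M}(f)$ and $\Phi^n_{A,N}(g\circ\phi^{\otimes n})=\Phi^n_{B,N}(g)\circ\phi^{\otimes n}$ from $\psi\circ d_M=d_N\circ\psi$ and $\phi\circ d_A=d_B\circ\phi$. You merely spell out the summand-by-summand bookkeeping (and the $n=0$ case) that the paper dismisses as obvious.
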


\begin{proof}
Only need to prove any $(f,g,h)\in \C^n(\phi,\psi)$,
$\pi^n\delta^n(f,g,h)=\delta^n\pi^n(f,g,h)$, i.e.,
\begin{align*}
&(\Phi_{A,M}^{n+1}(\partial_{\rm Alg}^n(f)),\Phi_{B,N}^{n+1}(\partial_{\rm Alg}^n(g)), \Phi_{A,N}^{n}(\psi\circ f)-\Phi_{A,N}^{n}( g\circ \phi^{\t n})-\Phi_{A,N}^{n}(\partial_{\rm Alg}^{n-1}h))\\
=&(\partial_{{\rm DO_\lambda}}^n(\Phi_{A,M}^{n}(f)),\partial_{{\rm DO_\lambda}}^n(\Phi_{B,N}^{n}(g)),\psi\circ(\Phi_{A,M}^{n}(f))-(\Phi_{B,N}^{n}(g))\circ \phi^{\t n}-\partial_{{\rm DO_\lambda}}^{n-1}(\Phi_{A,N}^{n-1}(h))).
\end{align*}
Since $\Phi^{\bullet}$ is a chain map, we only need to show  $\Phi_{A,M}^{n}(\psi\circ f)=\psi\circ(\Phi_{A,M}^{n}(f))$ and $\Phi_{B,N}^{n}( g\circ \phi^{\t n})=(\Phi_{B,N}^{n}(g))\circ \phi^{\t n}$. It is obvious due to $d_N\circ \psi=\psi \circ d_M$ and $d_B\circ \phi=\phi \circ d_A$.
\end{proof}
At the end of this section, we define cohomologies of  differential algebra morphisms with coefficients in their modules.
\begin{df}\label{df: cohomology of RB morphism}
Let $\pi^\bullet: \C^\bullet(\phi,\psi)\rightarrow \C^\bullet(\phi,_\triangleright\psi_\triangleleft)$ be a chain map defined in Proposition {\rm \ref{prop:chain map}}. We may define a cochain complex $\cm^\bullet(\phi,\psi)$ to be the negative shift of the mapping cone of $\pi^{\bullet}$, that is,
\begin{enumerate}
\item[(1)] $\cm^0(\phi,\psi)=\C^0(\phi,\psi)$,
\item[(2)]when $n\geqslant 1$, $\cm^n(\phi,\psi)=\C^n(\phi,\psi)\oplus \C^{n-1}(\phi,_\triangleright\psi_\triangleleft)$,
\end{enumerate}
and the differential map $\varrho^n :\cm^n(\phi,\psi)\rightarrow \cm^{n+1}(\phi,\psi)$ is defined by:

when $n=1$,
\begin{align}
&\varrho^1\big((f,g,n),(m_1,n_1)\big)\\\nonumber
=&\big(\delta^1(f,g,n),\delta^0(m_1,n_1)-\pi^1(f,g,n_1)\big)\\\nonumber
=&\big((\partial_{\rm Alg}^1(f_1),\partial_{\rm Alg}^1(g_1),\psi\circ f-g\circ\phi),-\Phi_{A,M}^1 (f),-\Phi_{B,N}^1 (g) , \psi(m_1)-n_1-
\Phi_{A,N}^{0}(h_1) \big),
\end{align}
when $n\geqslant 2$,
\begin{align}
&\varrho^n\big((f_1,g_1,h_1),(f_2,g_2,h_2)\big)\\\nonumber
=&\big(\delta^n(f_1,g_1,h_1),\delta^{n-1}(f_2,g_2,h_2)+(-1)^n\pi^n(f_1,g_1,h_1)\big)\\\nonumber
=&\big((\partial_{\rm Alg}^n(f_1),\partial_{\rm Alg}^n(g_1),\psi\circ f_1-g_1\circ\phi^{\t n}-\partial_{\rm Alg}^{n-1}(h_1)),\\\nonumber
&(\partial_{\rm DO_\lambda}^{n-1}(f_2)+(-1)^n\Phi_{A,M}^n (f_1) , \partial_{\rm DO_\lambda}^{n-1}(g_2)+(-1)^n\Phi_{B,N}^n (g_1) , \\\nonumber
&\psi\circ f_2-g_2\circ\phi^{\t n-1}-\partial_{\rm DO_\lambda}^{n-2}(h_2)+(-1)^{n-1}
\Phi_{A,N}^{n-1}(h_1) \big),
\end{align}
for $(f_1,g_1,h_1)\in \C^n(\phi,\psi), (f_2,g_2,h_2)\in \C^{n-1}(\phi,_\triangleright\psi_\triangleleft)$ and $\Phi^\bullet$ is defined by {\rm Definition} {\rm \ref{def:map_cone_da_to_do}}.

Denote the  cohomology group of $(\cm^\bullet(\phi,\psi), \varrho^\bullet)$ by $\mathrm{H}_{\mathrm{mor}_\lambda}^\bullet(\phi,\psi)$.
\end{df}

\section{Deformations of   differential morphisms}
In this section, we study formal deformations of a  differential algebra morphisms and consider the
rigidity of differential algebra morphisms.

Let $(A,\mu_A,d_A)$ and $(B,\mu_B,d_B)$ be  differential algebras of weight $\lambda$ and $\phi:(A, d_{A})\rightarrow (B, d_{B})$ be a morphism of differential algebras. For $X=\{A,B\}$, define
\begin{align*}
\mu_{X,t}&=\sum_{i=0}^{\infty}\mu_{X,i} t^i,~~~\mu_{X,0}=\mu_{X},\\
d_{X,t}&=\sum_{i=0}^{\infty}d_{X,i} t^i,~~~d_{X,0}=d_{X},\\
\phi_{t}&=\sum_{i=0}^{\infty}\phi_{i} t^i,~~~\phi_{0}=\phi.
\end{align*}

 $(\mu_{A,t},d_{A,t},\mu_{B,t},d_{B,t},\phi_{t})$ is called a $1$-parameter formal deformation of $\phi:(A, d_{A})\rightarrow (B, d_{B})$, if $(A[[t]],\mu_{A,t},d_{A,t})$, $(B[[t]],\mu_{B,t},d_{B,t})$ are $\field [[t]]$-differential algebras of weight $\lambda$ and $\phi_{t}: A[[t]]\rightarrow B[[t]]$ is a morphism of differential algebras.

Power series $(\mu_{A,t},d_{A,t},\mu_{B,t},d_{B,t},\phi_{t})$ is a 1-parameter formal deformation of $\phi:(A, d_{A})\rightarrow (B, d_{B})$ if and only if for any $a_1,a_2,a_3\in A$, $b_1,b_2,b_3\in B$ the following equations hold :
\begin{align*}
 \mu_{A,t}(a_1\t \mu_{A,t}(a_2\t a_3))&=\mu_{A,t}(\mu_{A,t}(a_1\t a_2)\t a_3),\\
 d_{A,t}\mu_{A,t}(a_1\t a_2)&=\mu_{A,t}(d_{A,t}(a_1)\t a_2)+\mu_{A,t}(a_1\t d_{A,t}(a_2))+\lambda u_{A,t}(d_{A,t}(a_1)\t d_{A,t}(a_2))\Big),\\
 \mu_{B,t}(b_1\t \mu_{B,t}(b_2\t b_3))&=\mu_{B,t}(\mu_{B,t}(b_1\t b_2)\t b_3),\\
 d_{B,t}\mu_{B,t}(b_1\t b_2)&=\mu_{B,t}(d_{B,t}(b_1)\t b_2)+\mu_{B,t}(b_1\t d_{B,t}(b_2))+\lambda u_{B,t}(d_{B,t}(b_1)\t d_{B,t}(b_2))\Big),\\
 \phi_{t}(\mu_{A,t}(a_1\t a_2))&=\mu_{B,t}(\phi_{t}(a_1)\t\phi_{t}(a_2)),\\
 d_{B,t}\phi_{t}(a_1)&=\phi_{t}d_{A,t}(a_1).
 \end{align*}

By expanding these equations and comparing the coefficients of $t^n$,   the following equations hold : for any $n\geqslant 0$,
\begin{equation}\label{Eq: deform eq for  products in DA}
	\sum_{i=0}^n\mu_{X,i}\circ(\mu_{X,n-i}\t \id)=\sum_{i=0}^n\mu_{X,i}\circ(\id\t \mu_{X,n-i}),\end{equation}
\begin{align}\label{Eq: Deform RB operator in DA}
&\sum_{i+j=n\atop i, j\geqslant 0}	d_{X,i}\circ\mu_{X,j} \nonumber\\
=&\sum_{i+j=n\atop i, j\geqslant 0} \mu_{X,i}\circ(\id\t d_{X,j})
  +\sum_{i+j=n\atop i, j\geqslant 0} \mu_{X,i}\circ(d_{X,j}\t \id),
\end{align}
\begin{equation}\label{Eq: deform eq for  morphism of multipication}
\sum_{i=0}^n\phi_i\circ\mu_{A,j}=\sum_{i+j+k=n\atop i, j, k\geqslant 0}\mu_{B,i}
\circ (\phi_j\t\phi_k),
	\end{equation}
\begin{equation}\label{Eq: deform eq for  morphism of operators}
	\sum_{i=0}^nd_{B,i}\circ\phi_{n-i}=\sum_{i=0}^n\phi_{i}\circ d_{A,n-i},\end{equation}
where $X=\{A,B\}$.

\begin{prop}
Let $(\mu_{A,t},d_{A,t},\mu_{B,t},d_{B,t},\phi_{t})$ be a 1-parameter formal deformation of $\phi:(A, d_{A})\rightarrow (B, d_{B})$, then $\big((\mu_{A,1},\mu_{B,1},\phi_{1}),(d_{A,1},d_{B,1},0)\big)$ is a 2-cocycle in the cochain complex $\cm^\bullet(\phi,\phi)$.
\end{prop}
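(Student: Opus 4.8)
The assertion is that $\varrho^2\big((\mu_{A,1},\mu_{B,1},\phi_1),(d_{A,1},d_{B,1},0)\big)=0$, where the coefficients are the differential $\phi$-bimodule $\langle A,B,\phi\rangle$, so that in the notation of Definition~\ref{df: cohomology of RB morphism} one has $\psi=\phi$, $M=A$, $N=B$. The plan is to expand $\varrho^2$ on this pair by the $n\geqslant 2$ formula of Definition~\ref{df: cohomology of RB morphism}. This yields a pair of triples: $\delta^2(\mu_{A,1},\mu_{B,1},\phi_1)$ in the first slot and $\delta^1(d_{A,1},d_{B,1},0)+\pi^2(\mu_{A,1},\mu_{B,1},\phi_1)$ in the second. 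I would then verify that each of the six resulting maps vanishes by identifying it, term by term, with the $n=1$ instance of one of the deformation constraints \eqref{Eq: deform eq for products in DA}--\eqref{Eq: deform eq for morphism of operators} derived earlier.

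The three components of the first triple and the $\psi$-component of the second triple are the routine part. The entries $\partial_{\rm Alg}^2(\mu_{X,1})$ ($X=A,B$), with $X$ viewed as a bimodule over itself, written out are exactly the $n=1$ case of the associativity constraint \eqref{Eq: deform eq for products in DA}; and $\phi\circ\mu_{A,1}-\mu_{B,1}\circ\phi^{\otimes 2}-\partial_{\rm Alg}^1(\phi_1)$ is the $n=1$ case of \eqref{Eq: deform eq for morphism of multipication} once one uses that $B$ is an $A$-bimodule through $\phi$, so that $\partial_{\rm Alg}^1(\phi_1)(a_1,a_2)=\phi(a_1)\phi_1(a_2)-\phi_1(a_1a_2)+\phi_1(a_1)\phi(a_2)$. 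For the second triple, the underlying linear map of ${}_{\rhd}\phi_{\lhd}$ is again $\phi$ and $\partial_{{\rm DO}_\lambda}^0(0)=0$, so the $\psi$-component reduces to $\phi\circ d_{A,1}-d_{B,1}\circ\phi-\Phi_{A,B}^1(\phi_1)$; since $\Phi_{A,B}^1(\phi_1)=\phi_1\circ d_A-d_B\circ\phi_1$ by Definition~\ref{def:map_cone_da_to_do}, this is precisely the $n=1$ case of \eqref{Eq: deform eq for morphism of operators}.

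The step I expect to be the main obstacle is the remaining pair of entries of the second triple, $\partial_{{\rm DO}_\lambda}^1(d_{X,1})+\Phi_{X,X}^2(\mu_{X,1})$ for $X=A,B$, where one must track the weight-$\lambda$ terms carefully. The point is that $\cdo^\bullet(X,X)=\ca^\bullet(X,{}_{\rhd}X_{\lhd})$, so the Hochschild differential is taken against the twisted bimodule: $\partial_{{\rm DO}_\lambda}^1(d_{X,1})(a_1,a_2)=(a_1+\lambda d_X(a_1))d_{X,1}(a_2)-d_{X,1}(a_1a_2)+d_{X,1}(a_1)(a_2+\lambda d_X(a_2))$, whereas by Definition~\ref{def:map_cone_da_to_do}, $\Phi_{X,X}^2(\mu_{X,1})(a_1,a_2)=\mu_{X,1}(d_X(a_1)\otimes a_2)+\mu_{X,1}(a_1\otimes d_X(a_2))+\lambda\mu_{X,1}(d_X(a_1)\otimes d_X(a_2))-d_X(\mu_{X,1}(a_1,a_2))$. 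Adding these and comparing with the $n=1$ case of \eqref{Eq: Deform RB operator in DA} — which records the first-order obstruction to $d_{X,t}$ being a differential operator of weight $\lambda$ for $\mu_{X,t}$ — one sees that all terms cancel in pairs, the $\lambda$-terms produced by the twisted action matching those coming from $\Phi^2$ and from the weight-$\lambda$ relation itself. Assembling the six vanishing identities then gives $\varrho^2\big((\mu_{A,1},\mu_{B,1},\phi_1),(d_{A,1},d_{B,1},0)\big)=0$; that this is a meaningful cocycle condition is guaranteed by $\varrho^\bullet$ being a differential, which in turn follows from $\Phi^\bullet$ and $\pi^\bullet$ being chain maps (Definition~\ref{def:map_cone_da_to_do} and Proposition~\ref{prop:chain map}).
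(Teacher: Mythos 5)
Your proposal is correct and follows essentially the same route as the paper: both reduce the cocycle condition to the order-$t$ coefficients of the deformation identities, and the key new computation --- that the $\psi$-component of the second triple is exactly the $n=1$ case of the compatibility $\sum d_{B,i}\circ\phi_{n-i}=\sum\phi_i\circ d_{A,n-i}$, i.e.\ $d_{B,1}\circ\phi-\phi\circ d_{A,1}-\Phi^1_{A,B}(\phi_1)=0$ --- is identical. The only difference is that the paper delegates the remaining four components to citations of Gerstenhaber--Schack and Chen--Guo--Wang--Zhou, whereas you verify them directly (and your explicit tracking of the weight-$\lambda$ terms in $\partial^1_{{\rm DO}_\lambda}(d_{X,1})+\Phi^2_{X,X}(\mu_{X,1})$ is in fact more careful than the paper's displayed deformation equation, which omits the $\lambda$-term that your cancellation correctly uses).
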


\begin{proof}
We only need to show that
\begin{align*}
&\varrho^2\big((\mu_{A,1},\mu_{B,1},\phi_{1}),(d_{A,1},d_{B,1},0)\big)\\
=&\big((\partial_{\rm Alg}^2(\mu_{A,1}),\partial_{\rm Alg}^2(\mu_{B,1}),\phi\circ \mu_{A,1}-\mu_{B,1}\circ\phi^{\t 2}-\partial_{\rm Alg}^{1}(\phi_{1})),\\\nonumber
&(-\partial_{\rm Alg}^{1}(d_{A,1})-\Phi_{A}^2 (\mu_{A,1}) , -\partial_{\rm Alg}^{1}(d_{B,1})-\Phi_B^2 (\mu_{B,1}) , -\phi\circ d_{A,1}+d_{B,1}\circ\phi-
\Phi_{A,B}^{1}(\phi_{1}) \big)\\
=&0.
\end{align*}
Due to  deformations of algebra morphism \cite{MS1} and deformations of differential algebras \cite{CGWZ}, it suffices to show $-\phi\circ d_{A,1}+d_{B,1}\circ\phi-
\Phi_{A,B}^{1}(\phi_{1})=0$.

When $n=1$,   Equations~(\ref{Eq: deform eq for  morphism of operators}) becomes
$$d_{B}\circ\phi_{1}+d_{B,1}\circ\phi=\phi_{1}\circ d_{A}+\phi\circ d_{A,1}$$
that is, $-\phi\circ d_{A,1}+d_{B,1}\circ\phi-
\Phi_{A,B}^{1}(\phi_{1})=0$, so  $\big((\mu_{A,1},\mu_{B,1},\phi_{1}),(d_{A,1},d_{B,1},0)\big)$ is a 2-cocycle.
\end{proof}

\begin{df}
The 2-cocycle $\big((\mu_{A,1},\mu_{B,1},\phi_{1}),(d_{A,1},d_{B,1},0)\big)$ is called an infinitesimal deformation of the $1$-parameter formal deformation $(\mu_{A,t},d_{A,t},\mu_{B,t},d_{B,t},\phi_{t})$.
\end{df}

Let $(\mu_{A,t},d_{A,t},\mu_{B,t},d_{B,t},\phi_{t})$ and $(\mu'_{A,t},d'_{A,t},\mu'_{B,t},d'_{B,t},\phi'_{t})$ be two 1-parameter formal deformations of $\phi:(A, d_{A})\rightarrow (B, d_{B})$. Then a formal isomorphism from $(\mu_{A,t},d_{A,t},\mu_{B,t},d_{B,t},\phi_{t})$ to
$(\mu'_{A,t},d'_{A,t},\mu'_{B,t},d'_{B,t},\phi'_{t})$  are two power series $(F_{A,t},F_{B,t})$ with
\begin{align*}
F_{A,t}&=\sum_{i=0}^{\infty}F_{A,i}t^i:A[[t]]\rightarrow A[[t]],~~F_{A,i}
\in \Hom(A,A), F_{A,0}=\id_A, \\
F_{B,t}&=\sum_{i=0}^{\infty}F_{B,i}t^i:B[[t]]\rightarrow B[[t]],~~F_{B,i}
\in \Hom(B,B), F_{B,0}=\id_B,
\end{align*}
such that for $X=\{A,B\}$
\begin{align}
\label{Eq: equivalent deformations 1}
F_{X,t}\circ\mu_{X,t}&=\mu'_{X,t}\circ(F_{X,t}\t F_{X,t}),\\
\label{Eq: equivalent deformations 2}
F_{X,t}\circ d_{X,t}&=d'_{X,t}\circ F_{X,t},\\
\label{Eq: equivalent deformations 3}
F_{B,t}\circ\phi_t &=\phi'_t\circ F_{A,t} .
\end{align}

\begin{prop}
The infinitesimal deformations of two equivalent $1$-parameter formal deformations of $\phi:(A,d_A)\rightarrow (B,d_B)$ are in the same cohomology class in $\hm^\bullet$.
\end{prop}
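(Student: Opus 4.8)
The plan is to show that if $(F_{A,t}, F_{B,t})$ is a formal isomorphism from one deformation to the other, then its linear part produces a coboundary in $\cm^\bullet(\phi,\phi)$ that accounts for the difference of the two infinitesimal cocycles. So first I would write $F_{X,t} = \id_X + F_{X,1}t + O(t^2)$ for $X \in \{A,B\}$, and extract the coefficient of $t^1$ from each of the three families of relations \eqref{Eq: equivalent deformations 1}, \eqref{Eq: equivalent deformations 2}, \eqref{Eq: equivalent deformations 3} defining a formal isomorphism.

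The expected outcome of that computation: equation \eqref{Eq: equivalent deformations 1} at order $t$ gives $\mu'_{X,1} - \mu_{X,1} = \partial_{\rm Alg}^1(F_{X,1})$ (the classical Gerstenhaber computation); equation \eqref{Eq: equivalent deformations 2} at order $t$ gives $d'_{X,1} - d_{X,1} = -\partial_{\rm Alg}^0\big(F_{X,1}\big) \circ \text{(something)}$, more precisely it should reproduce exactly the term $-\Phi_{X}^1(F_{X,1})$-type expression that appears in the differential $\varrho^2$, matching the known weight-$\lambda$ differential-algebra computation of \cite{CGWZ}; and equation \eqref{Eq: equivalent deformations 3} at order $t$ gives $\phi'_1 - \phi_1 = F_{B,1}\circ\phi - \phi\circ F_{A,1}$, i.e. $\psi\circ(\cdot) - (\cdot)\circ\phi$ applied to $F_{A,1}$ (here $\psi = \phi$ since the coefficient module is $\phi$ itself). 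Assembling these, one sees that
\[
\big((\mu'_{A,1},\mu'_{B,1},\phi'_{1}),(d'_{A,1},d'_{B,1},0)\big) - \big((\mu_{A,1},\mu_{B,1},\phi_{1}),(d_{A,1},d_{B,1},0)\big)
\]
equals $\varrho^1\big((F_{A,1},F_{B,1},0),(0,0)\big)$, or the appropriate $0$-cochain in $\cm^1(\phi,\phi)$, read off from the explicit formula for $\varrho^1$ in Definition \ref{df: cohomology of RB morphism}. Hence the two $2$-cocycles differ by a $\varrho^1$-coboundary and lie in the same class in $\hm^2(\phi,\phi)$.

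For the write-up I would organize it as: (1) recall that a $0$-cochain here is a pair $(F_{A,1},F_{B,1}) \in \C^1_{\rm Alg}(A,A)\oplus\C^1_{\rm Alg}(B,B)$, viewed inside $\cm^1(\phi,\phi) = \C^1(\phi,\phi)\oplus \C^0(\phi,{}_\rhd\phi_\lhd)$ (with the $\C^{0}(A,N)$-slot and the ${}_\rhd\phi_\lhd$-slot both zero); (2) state the three order-$t$ identities as displayed equations; (3) invoke the classical computations of \cite{MS1} and \cite{CGWZ} for the associative-algebra-morphism part and the differential-operator part, so that only the genuinely new slot — the one coming from $\Phi^\bullet$ and from $\phi\circ d_{A,1}$ versus $d_{B,1}\circ\phi$ — needs checking by hand; (4) conclude by matching with the explicit $\varrho^1$ formula.

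The main obstacle I anticipate is purely bookkeeping: getting the signs and the shift conventions right so that the order-$t$ coefficients assemble into precisely $\varrho^1$ of $(F_{A,1},F_{B,1})$ and not some sign-twisted variant, and making sure the $\Phi^\bullet$-corrections land in the correct component of the mapping cone. Since $\Phi^\bullet$ is nonlinear-looking in $d$ but in fact the order-$t$ part only sees the \emph{linear} contribution $d_{X,1}$ together with the undeformed $d_X$, the relevant piece of $\Phi^1$ is just the $k=1$ term, which should collapse to a manageable expression; still, verifying that it coincides with what \eqref{Eq: equivalent deformations 2} yields at first order is where care is needed. Everything else is a direct transcription of the Gerstenhaber--Schack argument into the differential-algebra-morphism setting.
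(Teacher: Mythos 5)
Your proposal is correct and follows essentially the same route as the paper: extract the coefficient of $t$ from the three defining relations of the formal isomorphism and identify the difference of the two infinitesimal $2$-cocycles with $\varrho^1$ applied to the element $\big((F_{A,1},F_{B,1},0),(0,0)\big)$ of $\cm^1(\phi,\phi)$. The only nitpick is terminological — that element is a $1$-cochain of the morphism complex, not a $0$-cochain — but you place it in the correct component, and the $d$-part does collapse to $-\Phi^1_X(F_{X,1})=d_X\circ F_{X,1}-F_{X,1}\circ d_X$ as you anticipated.
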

\begin{proof}
Let $(F_{A,t},F_{B,t})$ be a formal isomorphism from $(\mu_{A,t},d_{A,t},\mu_{B,t},d_{B,t},\phi_{t})$ to
$(\mu'_{A,t},d'_{A,t},\mu'_{B,t},d'_{B,t},\phi'_{t})$.  We have for $X=\{A,B\}$
\begin{align*}
&\mu_{X,1}-\mu'_{X,1}=\mu_X\circ(\id\t F_{X,1})-F_{X,1}\circ\mu_{X}+\mu_{X}\circ(F_{X,1}\t \id),\\
&d_{X,1}-d'_{X,1}=d_{X}\circ F_{X,1}-F_{X,1}\circ d_{X},\\
&\phi_{1}-\phi'_{1}=\phi\circ F_{A,1}-	F_{B,1}\circ \phi.
\end{align*}
Thus,
$$\big((\mu_{A,1},\mu_{B,1},\phi_{1}),(d_{A,1},d_{B,1},0)\big)-\big((\mu'_{A,1},\mu'_{B,1},\phi'_{1}),(d'_{A,1},d'_{B,1},0)\big)
=\varrho^1(F_{A,1},F_{B,1}).$$
\end{proof}

\begin{df}
	A differential algebra moprhism $\phi:(A,d_A)\rightarrow (B,d_B)$ is called to be rigid if every $1$-parameter formal deformation is equivalent to $(\mu_{A},d_{A},\mu_{B},d_{B},\phi)$.
\end{df}

\begin{thm}\label{thm: rigid}
Let $\phi:(A,d_A)\rightarrow (B,d_B)$ be a differentail algebra moprhism. If $\hm^2(\phi,\phi)=0$, then $\phi:(A,d_A)\rightarrow (B,d_B)$ is rigid.
\end{thm}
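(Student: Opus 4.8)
The plan is to follow the classical Gerstenhaber rigidity argument, transported to the cochain complex $\cm^\bullet(\phi,\phi)$. Suppose $(\mu_{A,t},d_{A,t},\mu_{B,t},d_{B,t},\phi_t)$ is a $1$-parameter formal deformation of $\phi$, and suppose it is not already trivial. Let $p\geqslant 1$ be the smallest index for which the tuple $(\mu_{A,p},\mu_{B,p},\phi_p)$ together with $(d_{A,p},d_{B,p},0)$ is nonzero; write this tuple as $\theta_p\in\cm^2(\phi,\phi)$. The first step is to verify that $\theta_p$ is a $2$-cocycle, i.e.\ $\varrho^2(\theta_p)=0$. This is exactly the computation already carried out in the Proposition preceding the Definition of infinitesimal deformation, except that Equations~(\ref{Eq: deform eq for products in DA})--(\ref{Eq: deform eq for morphism of operators}) are now read at degree $n=p$ rather than $n=1$; because all lower-order components $\mu_{X,i},d_{X,i},\phi_i$ for $1\leqslant i<p$ vanish, the degree-$p$ deformation equations collapse to precisely the statement $\varrho^2(\theta_p)=0$.

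Since $\hm^2(\phi,\phi)=0$, there is a $1$-cochain in $\cm^1(\phi,\phi)$, which unravels to a pair $(F_{A,p},F_{B,p})\in\Hom(A,A)\oplus\Hom(B,B)$, with $\varrho^1(F_{A,p},F_{B,p})=\theta_p$. Reading off the three components of $\varrho^1$ (the formula in the $n=1$ case of Definition~\ref{df: cohomology of RB morphism}, together with $\delta^0$ and $\Phi^1$) reproduces exactly the three equations appearing in the proof that equivalent deformations have cohomologous infinitesimals:
$$\mu_{X,p}=\mu_X\circ(\id\t F_{X,p})-F_{X,p}\circ\mu_X+\mu_X\circ(F_{X,p}\t\id),\quad d_{X,p}=d_X\circ F_{X,p}-F_{X,p}\circ d_X,$$
and $\phi_p=\phi\circ F_{A,p}-F_{B,p}\circ\phi$. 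Now set $F_{X,t}=\id_X - F_{X,p}t^p$ (equivalently $\id_X+F_{X,p}t^p$ up to sign, chosen to cancel) and apply the formal isomorphism $(F_{A,t},F_{B,t})$ to the given deformation, producing an equivalent deformation $(\mu'_{A,t},\dots,\phi'_t)$. The standard computation shows that $\mu'_{X,i}=d'_{X,i}=0$ and $\phi'_i=0$ for all $1\leqslant i\leqslant p$: the degree-$p$ terms produced by conjugating are precisely $\pm\varrho^1(F_{A,p},F_{B,p})$, which is rigged to kill $\theta_p$, while lower degrees are untouched. Iterating this procedure pushes the first nonzero term to arbitrarily high order, and taking the (well-defined) composite of all these formal isomorphisms yields an equivalence between the original deformation and the trivial one $(\mu_A,d_A,\mu_B,d_B,\phi)$.

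The routine but slightly delicate points are: (i) checking that the three scalar identities extracted from $\varrho^1(F_{A,p},F_{B,p})=\theta_p$ really do match the effect of conjugating by $F_{X,t}=\id-F_{X,p}t^p$ at order $t^p$ — this is where the signs in the $n=1$ formula of Definition~\ref{df: cohomology of RB morphism} and the maps $\Phi_{A,M}^1,\Phi_{B,N}^1,\Phi_{A,N}^0$ must be tracked carefully, in particular that the differential-operator components $d_{X,p}$ are handled by the $\Phi$-terms exactly as in \cite{CGWZ}; and (ii) justifying that the infinite composite of formal isomorphisms converges $t$-adically, which is immediate since the $p$-th isomorphism differs from the identity only in degrees $\geqslant p$. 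The genuine conceptual content — that a $2$-cocycle obstruction to triviality is a coboundary — is supplied entirely by the hypothesis $\hm^2(\phi,\phi)=0$, so no new ideas beyond the Gerstenhaber--Schack template are needed; the main obstacle is bookkeeping the five-fold structure $(\mu_A,d_A,\mu_B,d_B,\phi)$ consistently through the mapping-cone differential $\varrho^\bullet$.
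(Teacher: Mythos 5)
Your proposal is correct and follows essentially the same route as the paper: identify the first nonvanishing order $p$ of the deformation, check that the corresponding tuple is a $2$-cocycle in $\cm^2(\phi,\phi)$, use $\hm^2(\phi,\phi)=0$ to write it as $\varrho^1$ of a $1$-cochain, gauge it away by the formal isomorphism $\id\pm F_{X,p}t^p$, and iterate (the paper writes this out only for $p=1$ and then says ``by repeating the argument''). Your phrasing via the minimal nonzero index and the $t$-adic convergence of the composite isomorphism is in fact slightly more explicit about the induction than the paper's, but it is the same argument.
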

\begin{proof}
Let $(\mu_{A,t},d_{A,t},\mu_{B,t},d_{B,t},\phi_{t})$ be a 1-parameter formal deformation of $\phi:(A,d_A)\rightarrow (B,d_B)$, then $\big((\mu_{A,1},\mu_{B,1},\phi_{1}),(d_{A,1},d_{B,1},0)\big)$ is a 2-cocycle. By $\hm^2(\phi,\phi)=0$, there exists $\big((F'_1,G'_1,b),(a_1,b_1)\big)\in
\cm^1(\phi,\phi)$ such that $\varrho\big((F'_1,G'_1,b),(a_1,b_1)\big)=\big((\mu_{A,1},\mu_{B,1},\phi_{1}),
(d_{A,1},d_{B,1},0)\big)$, that is,
\begin{align*}
\mu_{A,1}(x\t y)&=xF'_1(y)-F'_1(xy)+F'_1(x)y  \indent\text{for} ~x,y\in A\\
\mu_{B,1}(x\t y)&=xG'_1(y)-G'_1(xy)+G'_1(x)y   \indent\text{for} ~x,y\in B\\
\phi_1(x)&=\phi F'_1(x)-G'_1\phi(x) \indent\text{for}~ x\in A\\
d_{A,1}(x)&=-\Phi^1(F'_1(x))   \indent\text{for} ~x\in A\\
d_{B,1}(y)&=-\Phi^1(G'_1(y))  \indent\text{for} ~y\in B\\
d_B(b)&=b_1-\phi(a_1).
\end{align*}

Set $F_{A,t}=\Id_A+F_{A,1}t$, $F_{B,t}=\Id_B+F_{B,1}t$, and define $(\mu'_{A,t},d'_{A,t},\mu'_{B,t},d'_{B,t},\phi'_{t})$ by
\begin{align}
\begin{split}\label{eq:Thm le}
\mu'_{A,t}&=F_{t}\circ\mu_{A,t}\circ(F^{-1}_{t}\t F^{-1}_{t}),\\
d'_{A,t}&=F_{t}\circ d_{A,t}\circ F^{-1}_{t},\\
\mu'_{B,t}&=G_{t}\circ\mu_{B,t}\circ(G^{-1}_{t}\t G^{-1}_{t}),\\
d'_{B,t}&=G_{t}\circ d_{B,t}\circ G^{-1}_{t},\\
\phi'_t&=G_{t}\circ\phi_t\circ F^{-1}_{t}.
\end{split}
\end{align}
Let $X=\{A,B\}$, we get
\begin{align*}
\mu'_{X,t}&=\mu_X+[F_{X,1}\mu_X+\mu_{A,1}-\mu_A({\Id}\otimes F_{X,1}+F_{X,1}\otimes {\Id})]t+\mu'_{X,2}t^2+\cdots,\\
&=\mu_X+\mu'_{X,2}t^2+\cdots\\
d'_{X,t}&=d_{X}+(F_{X,1}d_X+d_{X,1}-d_XF_{X,1})t+d'_{X,2}t^2+\cdots,\\
&=d_{X}+d'_{X,2}t^2+\cdots\\
\phi'_t&=\phi+(F_{B,1}\c\phi+\phi_1-\phi\c F_{A,1})t+\phi'_{2}t^2+\cdots\\
&=\phi+\phi'_{2}t^2+\cdots.
\end{align*}
Futher, we may verify that $(\mu'_{A,2},\mu'_{B,2},\phi'_{2},d'_{A,2},d'_{B,2},0)$ is also a $2$-cocyle. Then, by repeating the argument,  it is equivalent to a trivial deformation. Thus, $\phi:(A,d_A)\rightarrow (B,d_B)$ is rigid.
\end{proof}




\section{CCT theorem of differentail algebra morphisms}
In this section, we may study the Cohomologies Comparison Theorem (CCT) of  differentail algebra morphisms, which may say that the cohomology of a differentail algebra morphism is isomorphic to the cohomology of an auxiliary differentail algebra.

\begin{df}{\rm (\cite{ms1})}\label{df: morphism algebra}
Let $A,B$ be two associative algebras, $\phi: A\rightarrow B$ be a morphism of associative algebras, the mapping ring $\phi !$ is defined as $\phi !=A\oplus B\oplus B\phi$, the multipication is determined by associativity, the products in $A,B$ and the conditions: $a\cdot b=b\cdot a=\phi \cdot b=a\cdot \phi=\phi^2=0$ and $\phi \cdot a=\phi(a)\phi$, the unit of $\phi !$ is $1_A+1_B$. In other words, the elements of $\phi !$ are of the form $a+b_1+b_2\phi$ with $a\in A;b_1,b_2\in B$ and $(a+b_1+b_2\phi)(a'+b'_1+b'_2\phi)=aa'+b_1b'_1+(b_2\phi(a')+b_1b'_2)\phi$.
\end{df}

\begin{df}{\rm (\cite{ms1})}
Let $\psi:M\rightarrow N$ be a bimodule of $\phi: A\rightarrow B$, the $\phi !$-module $\psi !$ is defined by $\psi !=M\oplus N\oplus N \phi$, for $m+n_1+n_2\phi \in\psi !$, $a+b_1+b_2\phi\in\phi !$, the bimodule structure of $\phi !$ on
$\psi !$ is given by
\begin{align*}
(a+b_1+b_2\phi)(m+n_1+n_2\phi)=am+b_1n_1+(b_2\psi(m)+b_1n_2)\phi,\\
(m+n_1+n_2\phi)(a+b_1+b_2\phi)=ma+n_1b_1+(n_2\phi(a)+n_1b_2)\phi.
\end{align*}
\end{df}

\begin{lem}\label{lem: phi RB algebra}
Let $\phi:(A,d_A)\rightarrow (B,d_B)$ be a morphism of differentail algebras.  Define a linear map  $d_{\phi !}:\phi !\rightarrow \phi !$ by $d_{\phi !}(a+b_1+b_2\phi)=d_{A}(a)+d_{B}(b_1)+d_{B}(b_2)\phi$, where $\phi !=A\oplus B\oplus B\phi$ is an associative algebra by Definition {\rm \ref{df: morphism algebra}}. Then $(\phi !,d_{\phi !})$ is a differentail algebra.
\end{lem}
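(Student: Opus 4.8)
The plan is to verify directly that the linear map $d_{\phi !}$ satisfies the differential operator identity \eqref{Eq: Rota-Baxter relation} of weight $\lambda$ on the associative algebra $\phi !$, using that $d_A$ and $d_B$ are differential operators of weight $\lambda$ on $A$ and $B$ respectively, and that $\phi$ intertwines them. Since everything is $\field$-linear, it suffices to check the identity on a pair of general elements $u = a+b_1+b_2\phi$ and $v = a'+b_1'+b_2'\phi$ of $\phi !$, and by bilinearity of the product and of $d_{\phi !}$ one can even reduce to checking it on the "generators" coming from each summand $A$, $B$, and $B\phi$ separately, which cuts the bookkeeping considerably.

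First I would recall from Definition \ref{df: morphism algebra} that $(a+b_1+b_2\phi)(a'+b_1'+b_2'\phi) = aa' + b_1b_1' + \big(b_2\phi(a') + b_1b_2'\big)\phi$. Applying $d_{\phi !}$ to this gives $d_A(aa') + d_B(b_1b_1') + d_B\big(b_2\phi(a') + b_1b_2'\big)\phi$. Now I expand each term: $d_A(aa') = d_A(a)a' + a\, d_A(a') + \lambda d_A(a)d_A(a')$ and likewise for $d_B(b_1b_1')$; for the $B\phi$-component I use $d_B\big(b_2\phi(a')\big) = d_B(b_2)\phi(a') + b_2\, d_B(\phi(a')) + \lambda d_B(b_2)d_B(\phi(a'))$ and the intertwining relation $d_B\circ\phi = \phi\circ d_A$ to rewrite $d_B(\phi(a')) = \phi(d_A(a'))$, together with the analogous expansion of $d_B(b_1b_2')$. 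On the other side, I compute $d_{\phi !}(u)v + u\, d_{\phi !}(v) + \lambda\, d_{\phi !}(u)d_{\phi !}(v)$, where $d_{\phi !}(u) = d_A(a)+d_B(b_1)+d_B(b_2)\phi$, using the same multiplication rule three times. The claim is that grouping by the $A$-component, the $B$-component, and the $B\phi$-component, the two sides agree term by term; the $A$- and $B$-components are exactly the weight-$\lambda$ Leibniz identities for $d_A$ and $d_B$, and the $B\phi$-component collapses to an identity precisely because $\phi$ commutes with the differentials (so that, e.g., the cross term $d_B(b_2)\phi(d_A(a'))$ produced on the left matches $d_B(b_2)\cdot\phi(d_A(a'))$ appearing in $d_{\phi !}(u)\, v$ and in the $\lambda$-term).

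The main obstacle, such as it is, is purely organizational: the $B\phi$-summand carries three kinds of contributions ($d_B(b_2)\phi(a')$-type, $b_2\phi(d_A(a'))$-type, and the $b_1$-$b_2'$ mixed type), and one must be careful that the $\lambda$-weighted terms on the two sides match — in particular the term $\lambda d_B(b_2)\phi(d_A(a'))$ has to be recognized as the $B\phi$-part of $\lambda\, d_{\phi !}(u)\, d_{\phi !}(v)$, which uses $\phi(d_A(a')) = d_B(\phi(a'))$ once more. I would present the verification componentwise in a single \texttt{align*} display, explicitly writing the three projections, and note that each reduces to an instance of \eqref{Eq: Rota-Baxter relation} for $d_A$ or $d_B$ or to a consequence of $\phi\circ d_A = d_B\circ\phi$; this makes the proof short and transparent without grinding through the full nine-term expansion in prose. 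Finally, since $d_{\phi !}$ is manifestly $\field$-linear, this establishes that $(\phi !, d_{\phi !})$ is a differential algebra of weight $\lambda$.
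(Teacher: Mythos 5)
Your proposal is correct and follows essentially the same route as the paper's proof: a direct componentwise expansion of both sides of the weight-$\lambda$ Leibniz identity on $\phi!$ using the multiplication rule of Definition~\ref{df: morphism algebra}, with the $A$- and $B$-components reducing to the Leibniz rules for $d_A$ and $d_B$ and the $B\phi$-component handled via $d_B\circ\phi=\phi\circ d_A$. The only difference is organizational (your optional reduction to summand generators, which is valid since the identity is bilinear in the pair of arguments), so nothing further is needed.
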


\begin{proof}
For $a+b_1+b_2\phi,a'+b'_1+b'_2\phi \in \phi !$, we have
\begin{align*}
&(a+b_1+b_2\phi)d_{\phi !}(a'+b'_1+b'_2\phi)+d_{\phi !}(a+b_1+b_2\phi)(a'+b'_1+b'_2\phi)\\
&+\lambda d_{\phi !}(a+b_1+b_2\phi)d_{\phi !}(a'+b'_1+b'_2\phi)\\
=&(a+b_1+b_2\phi)[d_A(a')+d_B(b'_1)+d_B(b'_2)\phi]+[d_A(a)+d_B(b_1)+d_B(b_2)\phi](a'+b'_1+b'_2\phi)\\
&+\lambda (d_A(a)+d_B(b_1)+d_B(b_2)\phi)(d_A(a')+d_B(b'_1)+d_B(b'_2)\phi)\\
=&ad_A(a')+b_1d_B(b'_1)+[b_2\phi(d_A(a'))+b_1d_B(b'_2)]\phi+d_A(a)a'+d_B(b_1)b'_1\\
&+[d_B(b_2)\phi(a')+d_B(b_1)b'_2]\phi+\lambda [d_A(a)d_A(a')+d_B(b)d_B(b'_1)+(d_B(b_2)\phi(d_A(a'))+d_B(b_1)d_B(b'_2))\phi]\\
=&[ad_A(a')+d_A(a)a'+\lambda d_A(a)d_A(a')]+[b_1d_B(b'_1)+d_B(b_1)b'_1+\lambda d_B(b_1)d_B(b'_1)]\\
&+[b_2d_B(\phi(a'))+d_B(b_2)\phi(a')+\lambda d_B(b_2)d_B(\phi(a'))]\phi\\
&+[b_1d_B(\phi(b'_2))+d_B(b_1)b'_2+\lambda d_B(b_1)d_B(b'_2)]\phi\\
=&d_A(aa')+d_B(b_1b'_1)+d_B(b_2\phi(a')+b_1b'_2)\phi\\
=&d_{\phi !}((a+b_1+b_2\phi)(a'+b'_1+b'_2\phi))
\end{align*}
So, $d_{\phi !}$ is a differential  operator of weight $\lambda$. It follow that $(\phi !,d_{\phi !})$ is a differential algebra of weight $\lambda$.
\end{proof}

Let $\phi:(A, d_{A})\rightarrow (B, d_{B})$ be a morphism of differential algebras, $\langle M,N ,\psi\rangle$ be a differential $\phi$-bimodule. We define $d_{\psi !}:\psi !\rightarrow \psi ! $ by $d_{\psi !}(m+n_1+n_2\phi)=d_M(m)+d_N(n_1)+d_N(n_2)\phi$, in the same way, one may check that $(\psi !,d_{\psi !})$ is a differential bimodule over $(\phi !,d_{\phi !})$. By Remark \ref{remark morphism module}, $\langle{}_{\rhd}M_\lhd,{}_{\rhd}N_\lhd ,_\triangleright\psi_\triangleleft\rangle$ is a differential $\phi$-bimodule. So $(_{\rhd}\psi_\lhd !, d_{_{\rhd}\psi_\lhd !})$ is a bimodule over differential algebra $(\phi !,d_{\phi !} )$.

\begin{df}{\rm (\cite{ms2})}
Let $\phi:A\rightarrow B$ be an associative algebra morphism, $\langle M,N ,\psi\rangle$ be a bimodule of $\phi$, define
$\tau^\bullet_\phi:\C^\bullet(\phi,\psi)\rightarrow \C^\bullet_{\mathrm{Alg}}(\phi!,\psi!)$ as follows:
for $f=(f^A,f^B,f^{AB})\in \C^n(\phi,\psi)$, $\tau^n_\phi f$ defined by
\begin{align*}
&\tau^n_\phi f|_{B^{\t n}}=f^B; \tau f|_{A^{\t n}}=f^A\\
&\textmd{for}~ ~(b\phi,a_2,\cdots,a_n)\in B\phi\t A^{n-1}\\
&\tau^n_\phi f(b\phi,a_2,\cdots,a_n)=f^B(b,\phi(a_2),\cdots,\phi(a_n))\phi +b f^{AB}(a_2,\cdots,a_n)\phi \\
&\textmd{for}~ ~(b_1,\cdots,b_{r-1},b_r\phi,a_{r+1},\cdots,a_n)\in B^{r-1}\t B\phi\t A^{n-r}\\
&\tau^n_\phi f(b_1,\cdots,b_{r-1},b_r\phi,a_{r+1},\cdots,a_n)=
f^B(b_1,\cdots,b_{r-1},b_r,\phi(a_{r+1}),\cdots,\phi(a_n))\phi   \\
&\tau^n_\phi f(x_1,\cdots,x_n)=0. \indent\indent otherwise
\end{align*}
Then, $\tau^\bullet_\phi:\C^\bullet(\phi,\psi)\rightarrow \C^\bullet_{\mathrm{Alg}}(\phi!,\psi!)$ is a quasi-isomorphism.
\end{df}

Let $\phi:(A, d_{A})\rightarrow (B, d_{B})$ be a differential algebra morphism of weight $\lambda$ and $\langle M,N ,\psi\rangle$ be a differential $\phi$-bimodule, then  $\langle{}_{\rhd}M_\lhd,{}_{\rhd}N_\lhd ,_\triangleright\psi_\triangleleft\rangle$ is a differential $\phi$-bimodule. Let $\C^\bullet_{\mathrm{Alg}}(\phi !,_\triangleright\psi_\triangleleft!)$ = $\C^\bullet_{\mathrm{DO_\lambda}}(\phi!,\psi!)$.
We will show the following lemma:

\begin{lem}\label{lem: commu diagram}
The diagram
\[
\xymatrix{
\C^\bullet(\phi,\psi) \ar[d]^{\pi^\bullet} \ar[r]^{\tau^\bullet_\phi} &\C^\bullet_{\mathrm{Alg}}(\phi!,\psi!)\ar[d]^{\Phi^\bullet}\\
\C^\bullet(\phi,_\triangleright\psi_\triangleleft) \ar[r]^{\tau^\bullet_{\phi_\triangleleft}} &\C^\bullet_{\mathrm{DO\lambda}}(\phi!,\psi!)}.
\] is commutative, i.e., $\Phi^\bullet\circ\tau^\bullet_\phi=\tau^\bullet_{\phi_\triangleleft}\circ\pi^\bullet$.
\end{lem}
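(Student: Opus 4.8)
The plan is to verify the commutativity of the square degree by degree, i.e. to show $\Phi^n\circ\tau^n_\phi=\tau^n_{\phi_\triangleleft}\circ\pi^n$ on each component $\C^n(\phi,\psi)$, by evaluating both composites on an arbitrary cochain $f=(f^A,f^B,f^{AB})$ and then on an arbitrary tensor word $(x_1,\dots,x_n)\in(\phi!)^{\otimes n}$. Since the differential operator $d_{\phi!}$ acts componentwise as $d_A\oplus d_B\oplus d_B$ and $\tau_\phi$ is built entirely out of the three blocks $f^A,f^B,f^{AB}$ together with insertions of $\phi$ and applications of $\phi$ itself, both maps $\Phi^n$ and $\tau^n$ preserve the block decomposition of tensor words of $\phi!$ into the strata $A^{\otimes n}$, $B^{\otimes n}$, $B^{r-1}\otimes B\phi\otimes A^{n-r}$, and ``mixed junk'' (which goes to zero). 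So it suffices to check the identity stratum by stratum.

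The first two strata are essentially the one-algebra statement already available: on $A^{\otimes n}$ both sides reduce to the assertion that $\Phi^n_{A,M}\tau$ restricted to $A$-words equals $\tau$ of $\Phi^n_{A,M}(f^A)$, which follows because $\Phi^n$ only inserts $d_A(a_{i_j})$ into the entries and subtracts $d_M(f^A(\cdot))$ — exactly what $\tau^n_{\phi_\triangleleft}\circ\pi^n$ produces after unwinding $\pi^n(f^A,f^B,f^{AB})=(\Phi^n_{A,M}f^A,\Phi^n_{B,N}f^B,\Phi^{n-1}_{A,N}f^{AB})$. The $B^{\otimes n}$ stratum is identical with $A$ replaced by $B$. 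The content is in the stratum $(b_1,\dots,b_{r-1},b_r\phi,a_{r+1},\dots,a_n)$, where I must carefully match the two ways of producing $d$-insertions. On the left, $\Phi^n$ acting on the single element $\tau^n_\phi f$ of $\C^n_{\mathrm{Alg}}(\phi!,\psi!)$ replaces various entries $x_{i_j}$ by $d_{\phi!}(x_{i_j})$; when $x_{i_j}=b_r\phi$ this is $d_B(b_r)\phi$, when $x_{i_j}=b_s$ it is $d_B(b_s)$, when $x_{i_j}=a_s$ it is $d_A(a_s)$, and the subtracted term is $d_{\psi!}$ of $f^B(b_1,\dots,b_r,\phi(a_{r+1}),\dots)\phi$, i.e. $d_N$ of that element, landed in the $N\phi$ component. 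On the right, I first apply $\pi^n$, landing in $\C^n(\phi,_\triangleright\psi_\triangleleft)$ whose $B$-block is $\Phi^n_{B,N}(f^B)$ with its own $d_B$-insertions and $-d_N$ tail, then apply $\tau^n_{\phi_\triangleleft}$, which for this stratum outputs $\big(\Phi^n_{B,N}f^B\big)(b_1,\dots,b_{r-1},b_r,\phi(a_{r+1}),\dots,\phi(a_n))\,\phi$; here one uses $\phi\circ d_A=d_B\circ\phi$ to rewrite $d_B(\phi(a_s))=\phi(d_A(a_s))$ so the sum over subsets $i_1<\dots<i_k$ matches term for term. The only subtlety is the stratum $B\phi\otimes A^{n-1}$, which carries the extra summand $b\,f^{AB}(a_2,\dots,a_n)\phi$; here I must also check that $\Phi^n$ applied to that extra piece reproduces $b\,\big(\Phi^{n-1}_{A,N}f^{AB}\big)(a_2,\dots)\phi$, and that the $d$-insertion into the leading slot $b\phi\mapsto d_B(b)\phi$ matches what $\tau^n_{\phi_\triangleleft}$ expects from the $B$-block of $\pi^n$.

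The main obstacle is bookkeeping: keeping straight which subsets $\{i_1<\dots<i_k\}$ of positions are allowed to receive a $d$-insertion in each of the four combinatorial schemes, and confirming that the weight powers $\lambda^{k-1}$ attached by $\Phi^n$ on $\phi!$ agree with those attached by $\Phi^n_{B,N}$ (resp. $\Phi^n_{A,M}$, $\Phi^{n-1}_{A,N}$) after passing through $\tau$. Since $\tau$ neither inserts nor deletes tensor slots (it only reinterprets $b_r\phi$ as the pair $(b_r,\phi)$ in a single slot and applies $\phi$ to the $a$'s, or inserts one trailing $\phi$), the position sets are in canonical bijection and the $\lambda$-powers are literally the same, so the match is forced. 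I expect to present the $B$-block stratum in full and then remark that the $A$-block is formally identical and the two $B\phi$-strata follow by the same substitution $d_B\phi=\phi d_A$ together with the already-established componentwise behaviour of $\Phi^\bullet$; the ``otherwise $=0$'' stratum is trivial since $d_{\phi!}$ preserves the junk submodule and $f$ vanishes there. This yields $\Phi^\bullet\circ\tau^\bullet_\phi=\tau^\bullet_{\phi_\triangleleft}\circ\pi^\bullet$, completing the proof.
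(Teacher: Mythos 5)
The paper does not actually prove this lemma (it says the verification is ``straightforward but tedious'' and omits it), so your stratum-by-stratum direct computation is exactly the kind of argument that is needed, and your treatment of the strata $A^{\otimes n}$, $B^{\otimes n}$, $B^{\otimes(r-1)}\otimes B\phi\otimes A^{\otimes(n-r)}$ ($r\geqslant 2$) and the ``junk'' stratum is correct: on those, $d_{\phi!}$ preserves the slot types, $\phi\circ d_A=d_B\circ\phi$ converts $\phi(d_A(a_s))$ into $d_B(\phi(a_s))$, and both composites reduce to $\Phi^n_{A,M}(f^A)$, $\Phi^n_{B,N}(f^B)$, or $\bigl(\Phi^n_{B,N}f^B\bigr)(b_1,\dots,b_r,\phi(a_{r+1}),\dots)\phi$ respectively.

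There is, however, a genuine gap in your handling of the stratum $B\phi\otimes A^{\otimes(n-1)}$, precisely the place you flag as ``the only subtlety.'' Your stated reason that the match is ``forced'' because ``the position sets are in canonical bijection and the $\lambda$-powers are literally the same'' fails for the $f^{AB}$-summand: when the leading slot receives a $d$-insertion ($b\phi\mapsto d_B(b)\phi$), the subset $S\ni 1$ of size $k$ contributes $\lambda^{k-1}d_B(b)\,f^{AB}(\dots)$, which relative to the subset $T=S\setminus\{1\}$ of $\{2,\dots,n\}$ carries the power $\lambda^{|T|}$, \emph{not} $\lambda^{|T|-1}$ --- the powers are shifted by one. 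Moreover you assert the left-hand side should reproduce $b\,\bigl(\Phi^{n-1}_{A,N}f^{AB}\bigr)(a_2,\dots,a_n)\phi$ with the ordinary $B$-action, whereas $\tau^{n}_{\phi_\triangleleft}$ is the $\tau$-map for the \emph{twisted} bimodule $\langle{}_\rhd M_\lhd,{}_\rhd N_\lhd,{}_\rhd\psi_\lhd\rangle$, so its $B\phi\otimes A^{\otimes(n-1)}$ term is $b\rhd\bigl(\Phi^{n-1}_{A,N}f^{AB}\bigr)(a_2,\dots,a_n)\phi=(b+\lambda d_B(b))\bigl(\Phi^{n-1}_{A,N}f^{AB}\bigr)(a_2,\dots,a_n)\phi$. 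To close the argument you must expand the tail term $-d_{\psi!}\bigl(b\,f^{AB}(a_{2,n})\phi\bigr)=-d_N\bigl(b\,f^{AB}(a_{2,n})\bigr)\phi$ by the Leibniz rule $d_N(bx)=d_B(b)x+b\,d_N(x)+\lambda d_B(b)d_N(x)$; the term $-d_B(b)f^{AB}(a_{2,n})$ then cancels the $T=\emptyset$ insertion term, and the remaining sums regroup as
\[
b\Bigl(\sum_{T\neq\emptyset}\lambda^{|T|-1}f^{AB}(T)-d_N f^{AB}(a_{2,n})\Bigr)+\lambda d_B(b)\Bigl(\sum_{T\neq\emptyset}\lambda^{|T|-1}f^{AB}(T)-d_N f^{AB}(a_{2,n})\Bigr)=(b+\lambda d_B(b))\bigl(\Phi^{n-1}_{A,N}f^{AB}\bigr)(a_{2,n}),
\]
which is exactly the twisted action. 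So the identity does hold, but only because the shifted $\lambda$-powers, the Leibniz expansion of $d_N$, and the twist $\rhd$ on the target module conspire; none of the three is visible in your ``powers are literally the same'' justification, and you should carry out this computation explicitly rather than appeal to it.
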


This proof is straightforward but tedious calculation, so we omitt it.
Then, we define $\tau^\bullet:\cm^\bullet(\phi,\psi)\rightarrow \C^\bullet_{\mathrm{DA_\lambda}}(\phi!,\psi!)$ to be
$\tau^\bullet=\begin{pmatrix} \tau^{\bullet}_{\phi}&0\\0&\tau^{\bullet}_{\phi_\triangleleft}
\end{pmatrix}$. From  Lemma \ref{lem: commu diagram}, $\tau^\bullet$ is a chain map.
Thus, we get the following commutative diagram



\[\xymatrix{
		0\ar[r]& \C^{\bullet-1}(\phi,_\triangleright\psi_\triangleleft)\ar[r]\ar[d]^-{\tau^{\bullet-1}_{\phi_{\star}}}&\cm^\bullet(\phi,\psi)
\ar[r]\ar[d]^-{\tau^{\bullet}}&\C^\bullet(\phi,\psi)\ar[r]\ar[d]^-{\tau^{\bullet}_{\phi}}&0\\
		0\ar[r]&\C^{\bullet-1}_{\mathrm{DO_\lambda}}(\phi!,\psi!)\ar[r]&   \C^\bullet_{\mathrm{DA_\lambda}}(\phi!,\psi!)\ar[r]&  \C^\bullet_{\mathrm{Alg}}(\phi!,\psi!)\ar[r]&0. } \]

Since $\mathrm{H}^\bullet(\tau^{\bullet}_{\phi_{\triangleleft}})$ and $\mathrm{H}^\bullet(\tau^{\bullet}_{\phi})$ are isomorphisms, $\mathrm{H}^\bullet(\tau^\bullet)$ is also an isomorphism. It follows that the main theorem

\begin{thm}\label{thm: CCT of RB}
Suppose that $\phi:(A, d_{A})\rightarrow (B, d_{B})$ is a morphism of differential algebras of weight $\lambda$ and $\langle M,N ,\psi\rangle$ is a differential $\phi$-bimodule. Let  $\mathrm{H}_{\mathrm{mor}_\lambda}^n(\phi,\psi)$ be the cohomology group of $\phi$ with coefficients in $\langle M,N ,\psi\rangle$ and $\mathrm{H}_{\mathrm{DA}_\lambda}^n(\phi!,\psi!)$ be the cohomology gruop of differential algebra   $(\phi!,T_{\phi!})$ with coefficients in $(\psi!,T_{\psi!})$. Then $\mathrm{H}_{\mathrm{mor}_\lambda}^n(\phi,\psi)\cong\mathrm{H}_{\mathrm{DA}_\lambda}^n(\phi!,\psi!)$.
\end{thm}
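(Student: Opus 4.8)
The plan is to assemble the isomorphism from three ingredients already in place: the quasi-isomorphism $\tau^\bullet_\phi\colon \C^\bullet(\phi,\psi)\to \C^\bullet_{\mathrm{Alg}}(\phi!,\psi!)$ of Gerstenhaber--Schack, its ``twisted'' analogue $\tau^\bullet_{\phi_\triangleleft}\colon \C^\bullet(\phi,{}_\triangleright\psi_\triangleleft)\to \C^\bullet_{\mathrm{DO}_\lambda}(\phi!,\psi!)$, and the commuting square of Lemma~\ref{lem: commu diagram}, $\Phi^\bullet\circ\tau^\bullet_\phi=\tau^\bullet_{\phi_\triangleleft}\circ\pi^\bullet$. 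First I would recall that both $\cm^\bullet(\phi,\psi)$ and $\C^\bullet_{\mathrm{DA}_\lambda}(\phi!,\psi!)$ are, by construction, negative shifts of mapping cones: the former of $\pi^\bullet$, the latter of $\Phi^\bullet$ (Definition~\ref{def:map_cone_da_to_do} applied to the differential algebra $(\phi!,d_{\phi!})$ of Lemma~\ref{lem: phi RB algebra} and its bimodule $(\psi!,d_{\psi!})$). Because $\Phi^\bullet\circ\tau^\bullet_\phi=\tau^\bullet_{\phi_\triangleleft}\circ\pi^\bullet$, the pair $(\tau^\bullet_\phi,\tau^\bullet_{\phi_\triangleleft})$ is a morphism of the two defining chain maps, hence induces a chain map $\tau^\bullet=\begin{pmatrix}\tau^\bullet_\phi&0\\0&\tau^\bullet_{\phi_\triangleleft}\end{pmatrix}$ between the mapping cones; one checks directly that this $\tau^\bullet$ intertwines $\varrho^\bullet$ and $\partial^\bullet_{\mathrm{DA}_\lambda}$ by expanding both differentials in their block form and using the commuting square together with the fact that $\tau^\bullet_\phi,\tau^\bullet_{\phi_\triangleleft}$ are themselves chain maps.

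Next I would write down the short exact sequences of complexes that the cone construction provides. On the source side, $0\to \C^{\bullet-1}(\phi,{}_\triangleright\psi_\triangleleft)\to \cm^\bullet(\phi,\psi)\to \C^\bullet(\phi,\psi)\to 0$ is split as graded vector spaces by definition of $\cm^\bullet$; on the target side, $0\to \C^{\bullet-1}_{\mathrm{DO}_\lambda}(\phi!,\psi!)\to \C^\bullet_{\mathrm{DA}_\lambda}(\phi!,\psi!)\to \C^\bullet_{\mathrm{Alg}}(\phi!,\psi!)\to 0$ likewise. The chain map $\tau^\bullet$ maps the first sequence to the second, commuting with inclusions and projections by the diagonal form of $\tau^\bullet$. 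Passing to long exact sequences in cohomology yields a ladder, and I would apply the five lemma: the vertical maps $\mathrm{H}^\bullet(\tau^\bullet_\phi)$ and $\mathrm{H}^{\bullet-1}(\tau^\bullet_{\phi_\triangleleft})$ are isomorphisms (the first is Gerstenhaber--Schack's theorem, the second its evident twist, since ${}_\triangleright\psi_\triangleleft$ is again a differential $\phi$-bimodule and ${}_\triangleright\psi_\triangleleft!\cong {}_\triangleright(\psi!)_\triangleleft$ as bimodules over $(\phi!,d_{\phi!})$), so $\mathrm{H}^\bullet(\tau^\bullet)$ is an isomorphism, i.e. $\mathrm{H}^n_{\mathrm{mor}_\lambda}(\phi,\psi)\cong \mathrm{H}^n_{\mathrm{DA}_\lambda}(\phi!,\psi!)$ for all $n$.

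The step I expect to be the main obstacle is verifying the compatibility $\tau^\bullet_{\phi_\triangleleft}\colon \C^\bullet(\phi,{}_\triangleright\psi_\triangleleft)\xrightarrow{\ \sim\ } \C^\bullet_{\mathrm{DO}_\lambda}(\phi!,\psi!)=\C^\bullet_{\mathrm{Alg}}(\phi!,{}_\triangleright(\psi!)_\triangleleft)$, namely that applying the Gerstenhaber--Schack map $\tau^\bullet$ with the triangle-twisted coefficients on the morphism side coincides, cochain by cochain, with first forming $\phi!,\psi!$ and then twisting; this is exactly the content of Lemma~\ref{lem: commu diagram} read at the level of the underlying bimodules, and while the paper calls it ``straightforward but tedious'', it is where the weight-$\lambda$ correction terms of $\Phi^\bullet$ (the sums over $d_A$ inserted in $k$ positions) must be matched against the mixed $B\phi$-slots of the bar cochains on $\phi!$. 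Once that bookkeeping is done the five-lemma argument is formal. A minor additional check is that the bimodule identifications $(\phi!,d_{\phi!})$, $(\psi!,d_{\psi!})$ are genuinely a differential algebra and a differential bimodule of weight $\lambda$, but this is Lemma~\ref{lem: phi RB algebra} and the remark following it.
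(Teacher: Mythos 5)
Your argument is correct and is essentially identical to the paper's: both build the block-diagonal chain map $\tau^\bullet$ from the Gerstenhaber--Schack quasi-isomorphism and its twisted analogue, use the commuting square of Lemma \ref{lem: commu diagram} to map the cone short exact sequence for $\pi^\bullet$ to the one for $\Phi^\bullet$, and conclude by the five lemma on the resulting ladder of long exact sequences. You also correctly locate the real technical content in the verification of Lemma \ref{lem: commu diagram}, which the paper itself omits as ``straightforward but tedious.''
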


\section{The case of $\lambda =0$}
When $\lambda =0$, operad of differential algebras is Koszul \cite{LO}, so we can construct minimal models of differential algebra morphisms operads by \cite{Do}. Recall that a dg operad is called quasi-free if its underlying graded operad is free. We will construct $2$-colored operads over differential algebras and their minimal models. Following \cite{CGWZ}, we recall some notions.
\begin{definition}\cite{CGWZ}
The operads for differential algebras is defined to be the quotient of
the free graded operad $F(M)$ generated by a graded collection $M$ by an operadic ideal $I$, where
the collection $M$ is given by $M(1) = kd, M(2) = k\mu$ and $M(n) = 0$ for $n\neq 1, 2$ and where $I$ is
generated by
\begin{align*}
&\mu\circ_1 \mu-\mu\circ_2\mu, \\
&d \circ_1\mu-(\mu\circ_1 d+\mu\circ_2 d+\lambda(\mu\circ_1d)\circ_2d).
\end{align*}
Denote this operad by $_\lambda\mathfrak{Dif}$.
\end{definition}
Then minimal models of  $_\lambda\mathfrak{Dif}$ may be defined in \cite{CGWZ} as follows.
\begin{definition}\label{def-minimodel1}\cite{CGWZ}
Let $\mathcal{O} = (\mathcal{O}(1),..., \mathcal{O}(n),...)$ be the graded collection where $\mathcal{O}(1) = kd_1$ with
$|d_1|=0$ and for $n \geqslant 2$, $\mathcal{O}(n) = kd_n\oplus k m_n$ with $|d_n| = n-1, |m_n|= n-2$. The operad $_\lambda\mathfrak{Dif}_{\infty}$
of homotopy differential algebras is defined by the differential graded operad $(\mathcal{F(O)}, \partial)$, where
the underlying free graded operad is generated by the graded collection $\mathcal{O}$ and the action of the
diﬀerential $\partial$ on generators is given by the following equations. For each $n\geqslant 2$,
\begin{align}\label{eq-partial1}
\partial(m_n)=\sum^{n-1}_{j=2} \sum^{n-j+1}_{i=1} (-1)^{i+j(n-i)} m_{n-j+1}\circ_i m_j
\end{align}
and for $n\geqslant 1$,
\begin{align}\label{eq-partial2}
\partial(d_n)&=-\sum_{j=2}^{n}\sum_{i=1}^{n-j+1}(-1)^{i+j(n-i)}d_{n-j+1}\circ_i m_j \nonumber\\
&-\sum_{{{1\leqslant k_1<...<k_q\leqslant p;}\atop{l_1+...+l_q+p-q=n;}}\atop{l_1,...,l_q\geqslant1,2\leqslant p\leqslant n,1\leqslant q \leqslant p}}
(-1)^\xi\lambda^{q-1}(...(((m_p\circ_{k_1}d_{l_1})\circ_{k_2+l_1-1}d_{l_2})\nonumber\\
&\circ_{k_3+l_1+l_2-2}d_{l_3}...)\circ_{k_q+l_1+...+l_{q-1}}d_{l_q}
\end{align}
where $\xi:=\sum^q_{s=1}(l_s-1)(p-k_s)$. And  $_\lambda\mathfrak{Dif}_{\infty}$ is the minimal model of the operad $_\lambda\mathfrak{Dif}$ of differential algebras.
\end{definition}

Similar to \cite[Example 4]{Mark4}, we have the following definition.
\begin{definition}
Let $_\lambda\mathfrak{Dif}$ be the operad for differential algebras. Then there is a $2$-colored operad $_\lambda\mathfrak{Dif}^{\mathbb{A}\rightarrow \mathbb{B}}$  whose algebras are of the
form $f : A \rightarrow B$ , in which $A$ and $B$ are $_\lambda\mathfrak{Dif}$-algebras and $f$ is a morphism of $_\lambda\mathfrak{Dif}$-algebras, which is constructed as the following
\begin{align*}
_\lambda\mathfrak{Dif}^{\mathbb{A}\rightarrow \mathbb{B}}=\frac{_\lambda\mathfrak{Dif}^A*_\lambda\mathfrak{Dif}^B*F(f)}{(fx_A=x_Bf^\otimes,x\in _\lambda\mathfrak{Dif}(n))},
\end{align*}
where $_\lambda\mathfrak{Dif}^A$ and $_\lambda\mathfrak{Dif}^B$ are copies of $_\lambda\mathfrak{Dif}$  concentrated in the colors $\mathbb{A}$ and $\mathbb{B}$, respectively,
$x_A$ and $x_B$ are the respective copies of $x$ in $_\lambda\mathfrak{Dif}^A$ and $_\lambda\mathfrak{Dif}^B$, and $F(f)$ is the
free $2$-colored operad on the generator $f : A\rightarrow B$. The star $*$ denotes the free product (= the coproduct) of $2$-colored operad.

Precisely, $_\lambda\mathfrak{Dif}^{\mathbb{A}\rightarrow \mathbb{B}}$ is defined to be the quotient of
the free graded operad $F(N)$ generated by a graded collection $N$ by an operadic ideal $I$, where
the collection $N$ is given by $M(1) = k\{d_A,d_B,f\}, M(2) = k\{\mu_A,\mu_B\}$ and $M(n) = 0$ for $n\neq 1, 2$ and where $I$ is
generated by
\begin{align*}
&\mu_{X}\circ_1 \mu_{X}-\mu_{X}\circ_2\mu_{X}, \\
&d_X \circ_1\mu_X-(\mu_X\circ_1 d_X+\mu_X\circ_2 d_X+\lambda(\mu_X\circ_1d_X)\circ_2d_X),\\
&f\circ_1\mu_A-(\mu_B\circ_1 f)\circ_2f,\\
&f\circ_1 d_A-d_B\circ_1 f,
\end{align*}
where $X=\{A,B\}$.
\end{definition}

When $\lambda =0$, the operad $_\lambda\mathfrak{Dif}$ is Koszul. So we can construct the minimal model of $_\lambda\mathfrak{Dif}^{\mathbb{A}\rightarrow \mathbb{B}}$ by Dotsenko-Poncin \cite{Do}. Recall homotopy cooperad $_\lambda\mathfrak{Dif}^{\text{!`}}$ in \cite{CGWZ} as follows.

The homotopy cooperad $\mathscr{S}(_\lambda\mathfrak{Dif}^{\text{!`}})$ is defined on the graded collection with arity-n component
$$\mathscr{S}(_\lambda\mathfrak{Dif}^{\text{!`}})=k \widetilde{m}_n+k \widetilde{d}_n$$
with $|\widetilde{m}_n|=0, |\widetilde{d}_n|=1$ for $n\geq 1$. Then the coaugmented homotopy cooperad structure on the graded collection $\mathscr{S}(_\lambda\mathfrak{Dif}^{\text{!`}})=k \widetilde{m}_n+k \widetilde{d}_n$ is defined as \cite[Section 3.2]{CGWZ}.

When $\lambda= 0$, we denote $_\lambda\mathfrak{Dif}^{\mathbb{A}\rightarrow \mathbb{B}}$ by $\mathfrak{Dif}_{\bullet\to\bullet}$ and $\mathscr{S}(_\lambda\mathfrak{Dif}^{\text{!`}})\triangleq \mathfrak{Dif}^{\text{!`}}$.
Let us consider $\{A,B\}$-colored $\mathbb{S}$-module 
$$\mathcal{M}_{\bullet\to \bullet}=\mathfrak{\overline{Dif}}^{\text{!`}}_{A\to A}\oplus \mathfrak{\overline{Dif}}^{\text{!`}}_{B\to B}\oplus s \mathfrak{Dif}^{\text{!`}}_{A\to B}.$$

We define the cobar complex $\Omega(\mathcal{M}_{\bullet\to \bullet})$ by $\mathfrak{Dif}_{\bullet\to \bullet,\infty}.$ By \cite{Do},it follows that 

\begin{prop}
$\mathfrak{Dif}_{\bullet\to \bullet,\infty}$ is the minimal model of $\mathfrak{Dif}_{\bullet\to\bullet}$.
\end{prop}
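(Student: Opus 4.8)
The plan is to apply the general construction of Dotsenko--Poncin \cite{Do} for the minimal model of the colored operad governing morphisms between algebras over a Koszul operad, specialized to $\mathcal P={}_0\mathfrak{Dif}=\mathfrak{Dif}$. First I would assemble the three inputs. (i) For $\lambda=0$ the operad $\mathfrak{Dif}$ is quadratic and Koszul by \cite{LO}, so its Koszul dual homotopy cooperad $\mathfrak{Dif}^{\text{!`}}$ --- the $\lambda=0$ specialization of $\mathscr{S}({}_\lambda\mathfrak{Dif}^{\text{!`}})$ recalled above --- governs the minimal model of $\mathfrak{Dif}$, which at $\lambda=0$ is the cobar construction $\Omega(\mathfrak{Dif}^{\text{!`}})={}_0\mathfrak{Dif}_\infty$ with differential as in Definition \ref{def-minimodel1}. (ii) By its very definition the $2$-colored operad $\mathfrak{Dif}_{\bullet\to\bullet}$ is the morphism operad $\mathcal P_{\bullet\to\bullet}$ for $\mathcal P=\mathfrak{Dif}$: its algebras are triples $(A,B,f)$ with $A,B$ differential algebras of weight $0$ and $f\colon A\to B$ a morphism, and its quadratic presentation consists of the associativity and weight-$0$ Leibniz relations in each color together with the morphism relations $f\circ_1\mu_A=(\mu_B\circ_1 f)\circ_2 f$ and $f\circ_1 d_A=d_B\circ_1 f$. (iii) The colored $\mathbb S$-module $\mathcal M_{\bullet\to\bullet}=\overline{\mathfrak{Dif}}^{\text{!`}}_{A\to A}\oplus\overline{\mathfrak{Dif}}^{\text{!`}}_{B\to B}\oplus s\,\mathfrak{Dif}^{\text{!`}}_{A\to B}$ carries the colored homotopy cooperad structure that is Koszul dual to $\mathfrak{Dif}_{\bullet\to\bullet}$, with the extra suspension $s$ on the $A\to B$ block accounting for the generator $f$.

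Next I would invoke the Dotsenko--Poncin theorem: whenever $\mathcal P$ is Koszul, the colored operad $\mathcal P_{\bullet\to\bullet}$ is Koszul and its minimal model is the cobar construction $\Omega$ on $\overline{\mathcal P}^{\text{!`}}_{A\to A}\oplus\overline{\mathcal P}^{\text{!`}}_{B\to B}\oplus s\,\mathcal P^{\text{!`}}_{A\to B}$. Taking $\mathcal P=\mathfrak{Dif}$, which is Koszul by (i), this identifies $\Omega(\mathcal M_{\bullet\to\bullet})$ as the minimal model of $\mathfrak{Dif}_{\bullet\to\bullet}$; since by construction $\mathfrak{Dif}_{\bullet\to\bullet,\infty}=\Omega(\mathcal M_{\bullet\to\bullet})$, this is the assertion. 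To keep the argument self-contained I would then check the three defining features directly for this $\Omega(\mathcal M_{\bullet\to\bullet})$: (a) it is quasi-free, because any cobar construction is free on the generators dual to $\mathcal M_{\bullet\to\bullet}$ (the suspended and shifted copies of $\mathfrak{Dif}^{\text{!`}}$ in the three blocks), which in arity $n$ produce exactly generators $m_n,d_n$ in colors $A$ and $B$ together with a third family $f_n$ mediating the morphism; (b) it is minimal, i.e.\ the induced differential on generators is decomposable, which holds because the underlying colored cooperad is reduced and the cobar differential is built solely from its infinitesimal decomposition; (c) the canonical twisting-morphism map $\Omega(\mathcal M_{\bullet\to\bullet})\xrightarrow{\sim}\mathfrak{Dif}_{\bullet\to\bullet}$ is a quasi-isomorphism, which is precisely the Koszulness of $\mathfrak{Dif}_{\bullet\to\bullet}$.

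The main obstacle is establishing that $\mathfrak{Dif}_{\bullet\to\bullet}$ is Koszul --- equivalently, that forming the morphism colored operad preserves Koszulness in the differential-algebra setting. Concretely this reduces to checking a distributivity (rewriting / Gr\"obner-basis) compatibility between the ``diagonal'' relations (associativity plus Leibniz in colors $A$ and $B$) and the ``morphism'' relations, so that the bar--cobar resolution of $\mathfrak{Dif}_{\bullet\to\bullet}$ is acyclic; one either runs this argument on the explicit presentation above or cites the corresponding general statement in \cite{Do}, which supplies exactly this for an arbitrary Koszul $\mathcal P$. A secondary, more bookkeeping obstacle is to unwind the cobar differential on $\Omega(\mathcal M_{\bullet\to\bullet})$ --- signs, suspensions and colors included --- and verify that it reproduces the formulas one writes down for $\mathfrak{Dif}_{\bullet\to\bullet,\infty}$: the color-$A$ and color-$B$ components recover (\ref{eq-partial1})--(\ref{eq-partial2}) at $\lambda=0$, while the new $\partial(f_n)$ components encode the $A_\infty$-type coherence of the homotopy morphism. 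This last step is routine but sign-sensitive, and is where most of the detailed writing would be spent.
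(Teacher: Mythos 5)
Your proposal is correct and follows the same route as the paper, which simply defines $\mathfrak{Dif}_{\bullet\to\bullet,\infty}$ as the cobar construction $\Omega(\mathcal{M}_{\bullet\to\bullet})$ and cites the Dotsenko--Poncin theorem for Koszul operads; the paper gives no further argument. Your additional verification of quasi-freeness, minimality of the differential, and the quasi-isomorphism is more detail than the paper supplies, but it is the same underlying mechanism.
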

The general construction  produces an $L_\infty$-algebra structure on the space of $\mathbb{S}$-module morphisms
$$L_{A,B}=\Hom (\mathcal{M}_{\bullet\to \bullet}, End_{A\oplus B}),$$
where $End_{A\oplus B}$ is $\{A,B\}$-colored operad. This space of morphisms can be naturally identified with the space
$$(f_a,f_b, f_{ab})\in \Hom_{K} (\overline{\mathfrak{Dif}}^{\text{!`}}(A), A)\oplus\Hom_{K} (\overline{\mathfrak{Dif}}^{\text{!`}}(B), B)\oplus\Hom_{K} (s\mathfrak{Dif}^{\text{!`}}(A), B).$$  By means of \cite{Me} for properads, $(f_a,f_b, f_{ab})$ is a solution to the Maurer–Cartan equation of the $L_\infty$-algebra $L_{A,B}$ if and only if $f_a$ is a structure of a homotopy $\mathfrak{Dif}$-algebra on $A$,  $f_b$ is a structure of a homotopy $\mathfrak{Dif}$-algebra on B , and $f_{ab}$ is a homotopy morphism between these algebras.

\begin{remark}
A interesting question: for a differential algebra with $\lambda\neq 0$, how to construct its minimal model? Since it is not Koszul, the question is still hard for us.
\end{remark}

\hspace*{1cm}\\

\end{document}